\def\sideremark#1{\ifvmode\leavevmode\fi\vadjust{\vbox to0pt{\vss
 \hbox to 0pt{\hskip\hsize\hskip1em
 \vbox{\hsize2.1cm\tiny\raggedright\pretolerance10000
  \noindent #1\hfill}\hss}\vbox to15pt{\vfil}\vss}}}%
\numberwithin{equation}{section}
\def\polhk#1{\setbox0=\hbox{#1}{\ooalign{\hidewidth\lower1.5ex\hbox{`}\hidewidth\crcr\unhbox0}}}
\def\XXint#1#2#3{{\setbox0=\hbox{$#1{#2#3}{\int}$ }
\vcenter{\hbox{$#2#3$ }}\kern-.6\wd0}}
\renewcommand{\div}{\operatorname{div}}
\newcommand{\osc}{\operatorname{osc}}
\renewcommand{\div}{\operatorname{div}}
\newcommand{\tr}{\operatorname{Tr}}
\newcommand{\R}{\mathbb{R}}
\newcommand{\N}{\mathbb{N}}
\theoremstyle{plain}
\newtheorem{theorem}{Theorem}[section]
\newtheorem{definition}{Definition}[section]
\newtheorem{lemma}{Lemma}[section]
\newtheorem{proposition}{Proposition}[section]
\newtheorem{remark}{Remark}[section]
\newtheorem{Assumption}{A}
\newcommand{\NDelta}{\Delta_p^{\mathrm{N}}}
\title{Regularity for the normalized p-Laplacian equation with an arbitrary degeneracy law}
\author{Claudemir Alcantara and Makson Santos}
\begin{document}

\subjclass[2020]{35B65, 35J60, 35J70, 35D40} 

\keywords{}
  
\begin{abstract} 
We examine the interior regularity of solutions to a degenerate normalized $p$-Laplace equation, where the degeneracy is governed by a modulus of continuity whose inverse satisfies a Dini continuity condition. We prove that under very general assumptions on the degeneracy law, solutions belong to the $C^1$ class. We argue by approximating the solutions by a sequence of hyperplanes, which allows us to prove the desired regularity.
\end{abstract}   
 
\date{\today}

\maketitle
 
\tableofcontents


\section{Introduction}\label{sct intro}

In this paper, we investigate the regularity theory of viscosity solutions to a degenerate normalized $p$-Laplace equation of the form

\begin{equation}\label{1}
-\sigma(|Du(x)|)\NDelta u(x)=f(x) \quad \mbox{ in} \quad B_1,
\end{equation}
where $\sigma(\cdot)$ is a modulus of continuity, $p \in (1, +\infty)$ and $f \in {C(B_1)}\cap L^\infty(B_1)$. Under suitable conditions on $\sigma(\cdot)$, we prove that viscosity solutions to \eqref{1} are differentiable.   

The normalized $p$-Laplace operator is given by
\begin{align*}
\NDelta u & = |Du|^{2-p}\Delta_pu = \Delta u + (p-2)\Delta_\infty u \\
& = \Delta u + (p-2)\left\langle D^2u\dfrac{Du}{|Du|},\dfrac{Du}{|Du|} \right\rangle,
\end{align*}
where $\Delta_pu := \div(|Du|^{p-2}Du)$ is the usual $p$-Laplace operator. Consequently, the normalized $p$-Laplace operator can be seen as a uniformly elliptic operator in nondivergence form, with elliptic constants $\min\{p-1, 1\}$ and $\max\{p-1, 1\}$. It is important to note that, since this is a gradient-dependent model with a singularity at $\{Du = 0\}$, classical $C^{1, \alpha}$ results from the literature (e.g., \cite{caffarelli89, CC95}) do not apply directly.

The operator in \eqref{1} appears in many areas of mathematics, ranging from stochastic processes to differential geometry. For instance, \cite{Evans-Spruck} provides a geometric characterization related to the mean curvature flow, while \cite{Peres-Sheffield} offers a stochastic interpretation via tug-of-war games with noise and running pay-off. As a result, the normalized $p$-Laplace operator has been extensively studied over the years. Using a game-theoretic approach, the authors in \cite{LPS13} established new regularity estimates for $\NDelta u = 0$, which was later extended to the case of bounded, positive $f$ in \cite{Ruo16}. We refer the reader to \cite{CPCM13} for a PDE-based approach. We also mention that the parabolic version of the mentioned results can be found in \cite{BG15, Does2011, Jin-Silvestre2017, Manfredi-Parviainen-Rossi}.

Gradient estimates for the viscosity solutions to
\begin{equation}\label{eq_npl}
-\NDelta u = f \quad \mbox{ in} \quad B_1,    
\end{equation}
was studied in \cite{APR17}, where the authors showed that viscosity solutions to \eqref{eq_npl} are locally of class $C^{1, \alpha}$, for some $\alpha \in (0,1)$. They also derived estimates depending on the weaker $\|f\|_{L^q(B_1)}$ norm of $f$, provided $p>2$ and $q > \max\{2, d, p/2\}$. We also would like to mention the borderline regularity obtained in \cite{BM20}, where the author proved that $W^{2,q}$-viscosity solutions to \eqref{eq_npl}, with $f \in L(n, 1)$ (the standard Lorentz space), are differentiable. Moreover, if $f \in L^q$, and $q > d$, then solutions are $C_{loc}^{1, \alpha}(B_1)$, for some $\alpha \in (0,1)$.  

In the past decade, degenerate or singular normalized $p$-Laplace models of the form
\begin{equation}\label{eq_dnpl}
-|Du|^\gamma\NDelta u = f(x) \quad \mbox{ in} \quad B_1,    
\end{equation}
where $\gamma > -1$, have received significant attention. Notice that this formulation extends both the $p$-Laplace operator (for $\gamma = p-2$) and the normalized $p$-Laplace operator (for $\gamma = 0$). 

Regarding the regularity of viscosity solutions to \eqref{eq_dnpl}, the authors in \cite{birregrad} established that radial solutions belong to the class $C^{1, \alpha}$. Later, in \cite{Attouchi-Ruosteenoja2018}, this result was generalized to all viscosity solutions for $f \in C(B_1)\cap L^\infty(B_1)$, proving that they are locally of class $C^{1, \alpha}(B_1)$. Additionally, when $\gamma \in [-1,0)$ and $p \in \left(1, 3 + (2/n-2)\right)$, solutions were shown to be locally in $W^{2,2}(\Omega)$. For the case $\gamma > 0$, the authors also obtained $W^{2,2}_{\text{loc}}$ estimates, provided that $|p - 2 + \gamma|$ is sufficiently small.  

More recently, an equation with a nonhomogeneous degeneracy was studied in \cite{WYG24}, where the authors proved local $C^{1, \alpha}$-regularity for viscosity solutions to  
\[
-\left[|Du|^{\alpha(x,u)} + a(x)|Du|^{\beta(x,u)}\right]\NDelta u = f(x) \quad \text{in} \quad B_1,  
\]  
under suitable conditions on the exponents $0 \leq \alpha(\cdot), \beta(\cdot)$ and the coefficient $0 \leq a(\cdot) \in C(B_1)$. Equations with Hamiltonian terms and strong absorptions were also the subject of research. In \cite{BessadaSilva2025}, the authors study the equation
\[
|\nabla u|^\theta\left(\NDelta u + \left\langle {\mathcal B}(x), \nabla u \right\rangle \varrho(x)|\nabla u|^\theta \right) = f(x) \quad \mbox{ in} \quad B_1,
\]
and show a series of properties that include existence of viscosity solutions and gradient estimates for viscosity solutions. While in \cite{AlcdaSilSant2025}, the authors work with the dead core problem
\[
-|\nabla u|^{\theta}\NDelta u = f(x, u) \quad \mbox{ in} \quad B_1, 
\]
and prove in particular that solutions are of class $C^{1, \frac{1}{1+\theta}}$ over the free boundary $\partial\{u=0\}\cap B_{1/2}$. They also establish properties like nondegeneracy of solutions and uniform positive density of the free boundary. 

The primary objective of this paper is to establish regularity results under a general degeneracy law $\sigma(\cdot)$. To the best of our knowledge, the first instance of this type of problem appearing in connection with the regularity of solutions was in the recent work \cite{APPT2022}, where the authors studied a fully nonlinear degenerate elliptic equation of the form  
\begin{equation}\label{eq_degfull}
\sigma(|Du|)F(D^2u) = f(x) \quad \text{in} \quad B_1,    
\end{equation}
where $\sigma(\cdot)$ is a modulus of continuity with $\sigma^{-1}(\cdot)$ being Dini continuous. Under these assumptions, the authors proved that solutions belong to the class $C^1$.  

As usual, the argument relies on the existence of hyperplanes that approximate the solution $u$ near a point $x_0$. The main challenge in applying this method for this particular model is that the sequence of solutions generated in the process does not satisfy the same class of equations. To overcome this difficulty, the authors also introduced a novel technique that prevents the sequence of degeneracy laws constructed in this procedure from further degenerating. However, this approach comes at the cost of losing the $C^{\alpha}$ modulus of continuity for the gradient of solutions. Following this work, the study of regularity results for general degeneracy laws has emerged in various contexts, including nonlocal equations \cite{WF24} and free transmission problems \cite{PS24}. We also refer to \cite{AN25} for further developments. Under similar assumptions, we establish $C^1$-regularity for the model in \eqref{1}. See Theorem \ref{theo_1} below. 

The remainder of this manuscript is organized as follows: In Section 2, we collect some auxiliary results and state our main theorems. In section 3, we establish compactness properties for solutions. Finally, the differentiability of solutions is the subject of Section 4.


\section{Preliminaries}

\subsection{Definitions}

This section gathers some definitions used throughout this article. The notation $B_r(x_0)$ stands for the ball of radius $r$ centered at $x_0$. In what follows, we present the definition of viscosity solutions for the solutions to $-\Delta^N_pu=f$.  

\begin{definition}[Viscosity solution]
Let $1 < p < \infty$ and $f \in C(B_1)$. We say that $u \in C(B_1)$ is a viscosity subsolution to 
\begin{equation}\label{eq_normal}
-\Delta_p^Nu = f(x) \quad \mbox{ in} \quad B_1,    
\end{equation}
if for every $x_0 \in B_1$ and $\varphi \in C^2(B_1)$ such that $u-\varphi$ has a local maximum at $x_0$, then
\begin{equation*}
\begin{cases}
-\Delta_p^N\varphi(x_0) \leq f(x_0), \hspace{3.88cm} \mbox{ if } \quad D\varphi(x_0) \not= 0,\\
-\Delta\varphi(x_0) -(p-2)\lambda_{max}(D^2\varphi(x_0)) \leq f(x_0) \quad \mbox{ if } \quad D\varphi(x_0) = 0 \mbox{ and } p\geq 2,\\
-\Delta\varphi(x_0) -(p-2)\lambda_{min}(D^2\varphi(x_0)) \leq f(x_0) \quad \mbox{ if } \quad D\varphi(x_0) = 0 \mbox{ and } 1 < p < 2.
\end{cases}    
\end{equation*}
Similarly, we say that $u \in C(B_1)$ is a viscosity supersolution to \eqref{eq_normal} if for every $x_0 \in B_1$ and $\varphi \in C^2(B_1)$ such that $u-\varphi$ has a local minimum at $x_0$, then
\begin{equation*}
\begin{cases}
-\Delta_p^N\varphi(x_0) \geq f(x_0), \hspace{3.88cm} \mbox{ if } \quad D\varphi(x_0) \not= 0,\\
-\Delta\varphi(x_0) -(p-2)\lambda_{max}(D^2\varphi(x_0)) \geq f(x_0) \quad \mbox{ if } \quad D\varphi(x_0) = 0 \mbox{ and } p\geq 2,\\
-\Delta\varphi(x_0) -(p-2)\lambda_{min}(D^2\varphi(x_0)) \geq f(x_0) \quad \mbox{ if } \quad D\varphi(x_0) = 0 \mbox{ and } 1 < p < 2.
\end{cases}    
\end{equation*}
A function $u \in C(B_1)$ is a viscosity solution to \eqref{eq_normal} if it is both a viscosity subsolution and a viscosity supersolution. Finally, we say that $u$ is a normalized viscosity solution if $\sup_{B_1}|u| \leq 1$
\end{definition}

\begin{remark}\label{rem_welldef}
Since $\sigma(\cdot)$ is a modulus of continuity, the operator in \eqref{1} is continuous, and we can use the standard notion of viscosity solution, found, for instance, in \cite{crandall1992user}. Indeed, we can rewrite the operator in \eqref{1} as 
\[
F(t, M) := \sigma(|t|)\tr\left[\left(I + v\otimes v\right)M\right],
\]
where $v:=t/|t|$ is a unit vector. Hence, 
\begin{align*}
|\lim_{t \to 0}  F(t, M)| & = \left|\lim_{t \to 0} \left[\sigma(|t|)\tr M + \sigma(|t|)(p-2)\tr(v^TMv)\right]\right| \\
& = \left|\lim_{t \to 0} \left[\sigma(|t|)\tr M + \sigma(|t|)(p-2)v^TMv\right]\right| \\
&\leq \lim_{t \to 0} \left|\sigma(|t|)\tr M\right| + \lim_{t \to 0}|\sigma(|t|)(p-2)|\|M\| \\
&\leq 0.     
\end{align*}
Therefore $\lim_{t \to 0}  F(t, M) = 0$.
\end{remark}

Since the operator in \eqref{1} is continuous, we can use the standard notion of viscosity solution.

\begin{definition}[Viscosity solution II]
Let $1 < p < \infty$ and $f \in C(B_1)$. We say that $u \in C(B_1)$ is a viscosity subsolution to \eqref{1} if for every $x_0 \in B_1$ and $\varphi \in C^2(B_1)$ such that $u-\varphi$ has a local maximum at $x_0$, then
\begin{equation*}
-\sigma(|D\varphi(x_0)|)\Delta_p^N\varphi(x_0) \leq f(x_0).       
\end{equation*}
Similarly, we say that $u \in C(B_1)$ is a viscosity supersolution to \eqref{1} if for every $x_0 \in B_1$ and $\varphi \in C^2(B_1)$ such that $u-\varphi$ has a local minimum at $x_0$, then
\begin{equation*}
-\sigma(|D\varphi(x_0)|)\Delta_p^N\varphi(x_0) \geq f(x_0).       
\end{equation*}
A function $u \in C(B_1)$ is a viscosity solution if it is both a viscosity subsolution and a viscosity supersolution.
\end{definition}

We close this section by letting $c_0$ denote the space of sequences of positive numbers that converge to zero.

\subsection{Assumptions and main results}

In this section, we detail the main assumptions of the paper and state our main results. We start with the conditions satisfied by the degeneracy term $\sigma(\cdot)$.

\begin{Assumption}[Modulus of continuity]\label{Asigma}
We assume the $\sigma(\cdot)$ is a modulus of continuity, i.e., $\sigma : [0, +\infty) \to [0, +\infty)$ is a non-decreasing function with $\displaystyle\lim_{t\to 0}\sigma(t) = 0$. We also assume that $\sigma$ has a normalization, in the sense that $\sigma(1)\geq 1$.
\end{Assumption}

We emphasize that the condition $\sigma(1)\geq 1$ is just a normalization, and it is not a restriction. Next, we give the main assumption on the modulus of continuity $\sigma(\cdot)$

\begin{Assumption}[Dini continuity]\label{Adini}
We suppose that $\sigma^{-1}: \sigma([0, +\infty)) \to [0, +\infty)$ is Dini continuous, meaning that
\[
\int_0^\tau\dfrac{\sigma^{-1}(t)}{t}dt \leq +\infty,
\]
for some $\tau >0$. 
\end{Assumption}
An important characterization of the Dini condition is given in terms of summability along geometric sequences. More precisely, a modulus of continuity $\gamma$ satisfies the Dini condition above if, and only if, for every $\theta\in(0,1)$ 
\[
\sum_{n=1}^{\infty}\gamma(\tau\cdot\theta^n)<\infty.
\]
For more details about this equivalence, we refer the readers to \cite[Definition 1]{APPT2022} and the discussion presented there. The next assumption concerns the regularity of the source term.

\begin{Assumption}[Source term]\label{Af}
We assume that $f \in C(B_1)\cap L^\infty(B_1)$    
\end{Assumption}

Although we assume $f$ to be continuous, it is a mere technicality; none of the estimates presented here will depend on the modulus of continuity of $f$.

Now, we are ready to state the main result of this manuscript, a result concerning the $C^1$-regularity of viscosity solutions to \eqref{1}.

\begin{theorem}\label{theo_1}
Let $u \in C(B_1)$ be a viscosity solution to \eqref{1} and suppose that A\ref{Asigma}-A\ref{Af} are satisfied. Then $u \in C^1_{loc}(B_1)$, and there exists a modulus of continuity $\gamma: [0, +\infty) \to [0, +\infty)$, depending only on $d$, $p$ and $\|f\|_{L^\infty(B_1)}$ such that
\[
|Du(x) - Du(y)| \leq \gamma(|x-y|),
\]
for every $x, y \in B_{1/2}$.
\end{theorem}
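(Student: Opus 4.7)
The plan is to establish $C^1$-regularity at each $x_0\in B_{1/2}$ by the classical hyperplane-approximation scheme, adapted from \cite{APPT2022}. The idea is to construct inductively a sequence of affine functions $\ell_n$ satisfying $\|u-\ell_n\|_{L^\infty(B_{\rho^n}(x_0))}\le r_n$ for a suitable decaying sequence $r_n$, so that $\{D\ell_n\}$ is Cauchy; its limit will be $Du(x_0)$, and the rate of convergence will produce the claimed modulus $\gamma$. Uniformity of the constants in $x_0$ will then upgrade pointwise differentiability to $u\in C^1_{\mathrm{loc}}(B_{1/2})$ with a modulus independent of $x_0$.

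The first ingredient, to be developed in Section 3, is an approximation lemma: for every $\delta>0$ there is $\eps(\delta)>0$ such that every normalized viscosity solution of \eqref{1} with $\|f\|_{L^\infty(B_1)}\le \eps$ lies within $L^\infty$-distance $\delta$ in $B_{3/4}$ of some viscosity solution $h$ of $-\NDelta h=0$. This follows from a compactness/contradiction scheme based on equicontinuity of normalized viscosity solutions of \eqref{1} and the continuity of the operator recorded in Remark \ref{rem_welldef}. Combining this with the $C^{1,\alpha_0}$-estimates for $\NDelta$-harmonic functions from \cite{APR17, Ruo16}, after fixing $\rho$ small and $\delta=\delta(\rho)$, one obtains an affine $\ell$ with $\|u-\ell\|_{L^\infty(B_\rho)}\le \rho^{1+\alpha_0/2}$ and $|D\ell|\le C$. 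Rescaling $u_1(x):=(u(\rho x)-\ell(\rho x))/\rho^{1+\alpha_0/2}$ yields a new normalized function solving an equation of the form \eqref{1} but with a \emph{shifted} degeneracy $\sigma_1(t)=\sigma(|D\ell|+\rho^{\alpha_0/2}t)$ and a rescaled source gaining a factor $\rho^{n(1-\alpha_0/2)}$.

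The main obstacle is that the shifted $\sigma_1$ need not belong to the same class as $\sigma$ and can in principle degenerate further at each iteration, so one cannot naively reapply the approximation lemma. Following the stabilization idea of \cite{APPT2022}, at each scale $n$ we split into two regimes depending on the size of $|D\ell_n|$ relative to $\rho^{n\alpha_0/2}$: in the nondegenerate regime $\sigma_n$ stays uniformly bounded below on the relevant range of $|Du_n|$ and the equation becomes uniformly elliptic, so the Caffarelli-type step goes through with a universal decay; in the degenerate regime $|D\ell_n|$ is comparable to $\rho^{n\alpha_0/2}$, so the deviation of $u$ from the constant $\ell_n(x_0)$ in $B_{\rho^n}(x_0)$ is controlled by a quadratic-type bound, and one freezes the gradient, paying a cost measured by $\sigma^{-1}$ at the current scale. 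Either way one obtains an admissible $\ell_{n+1}$ with $|D\ell_{n+1}-D\ell_n|$ bounded by a quantity of size $\sigma^{-1}(c\rho^n)$ (plus classical geometric contributions). Summing these increments and invoking the equivalent series formulation of A\ref{Adini}, namely $\sum_n \sigma^{-1}(\tau\rho^n)<\infty$, gives summability of the slope increments and produces an explicit modulus of continuity $\gamma$. I expect Step 4 — the stabilization of the degeneracy law along the iteration, and the loss of a power-rate modulus in favor of a Dini-type $\gamma$ — to be the principal technical difficulty; the remaining steps are routine adaptations of the standard iteration.
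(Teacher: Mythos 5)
Your overall plan -- hyperplane iteration \`a la \cite{APPT2022}, compactness/contradiction for the approximation lemma, summing gradient increments via the Dini condition -- is the same as the paper's, and you correctly identify the crux (stabilizing the degeneracy law along the iteration). However, the step where you actually need an idea is only gestured at, and the rescaling formula you write down is wrong; together these leave a real gap.

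First, the rescaled degeneracy. If $v_1(x)=(u(\rho x)-\ell(\rho x))/(\mu_1\rho)$, then $Dv_1=(Du(\rho x)-D\ell)/\mu_1$, $D^2v_1=(\rho/\mu_1)D^2u(\rho x)$, and the equation for $v_1$ reads
\[
-\,\frac{\mu_1}{\rho}\,\sigma\!\left(\mu_1\left|Dv_1+\tfrac{D\ell}{\mu_1}\right|\right)\,
\tr\!\left[\Bigl(I+(p-2)\,\overline{\nu}\otimes\overline{\nu}\Bigr)D^2v_1\right]=f(\rho x),
\qquad \overline{\nu}=\frac{Dv_1+D\ell/\mu_1}{|Dv_1+D\ell/\mu_1|},
\]
so the correct renormalized law is $\sigma_1(t)=\tfrac{\mu_1}{\rho}\,\sigma(\mu_1 t)$ \emph{together with} the separate affine shift $\xi_1=D\ell/\mu_1$. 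Your $\sigma_1(t)=\sigma(|D\ell|+\rho^{\alpha_0/2}t)$ is not of this form: it moves the shift into the scalar argument of $\sigma$ (which is incorrect -- the direction matters, as one can see from the vector $\overline{\nu}$ above), and, more importantly, it drops the multiplicative factor $\mu_1/\rho>1$. This factor is exactly the mechanism the paper uses to prevent the iterated laws from collapsing to zero.

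Second, the stabilization itself. Your ``two regimes'' splitting is stated as a plan, not a proof, and as written it does not explain how the approximation lemma is to be reapplied at scale $n$ when the equation now involves $\sigma_n$ rather than $\sigma$. The compactness/contradiction argument in Lemma~\ref{h_function} must run uniformly over all the laws produced by the iteration; this fails if the family $\{\sigma_n\}$ can degenerate. The paper's resolution (Section~\ref{crafting_sequences}) is to \emph{precompute} the scaling factors $(\mu_n)$ from the Dini condition via Lemma~\ref{c_0-in l_1}, choosing $\mu_n$ so that $\sigma_n(c_n)\geq 1$ for a fixed null sequence $(c_n)$; this makes $\mathcal{N}=\{\sigma_n\}$ \emph{shored-up} and hence non-collapsing (Proposition~\ref{prop_sn}), which is precisely the hypothesis used in the stability Proposition~\ref{prop_stab} and in the Approximation Lemma~\ref{h_function}. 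In exchange, the radius shrinks at the non-geometric rate $\bigl(\prod_{j\le n}\mu_j\bigr)\rho^n$, and $\ell^1$-summability of $\prod\mu_j$ follows from $\sum\sigma^{-1}(\theta^n)<\infty$ -- this is where the Dini assumption is spent and where the Hölder rate is traded for a Dini-type modulus. Without an explicit construction of this kind, neither your ``nondegenerate regime'' (you must guarantee a uniform lower bound on $\sigma_n$ on the relevant range, not just argue $|D\ell_n|$ is large compared to $\rho^{n\alpha_0/2}$) nor your ``degenerate regime / freeze the gradient'' step is justified; the claim that the slope increment is $O(\sigma^{-1}(c\rho^n))$ is plausible but not derived. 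This is the missing idea.
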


Observe that, as in \cite{APPT2022}, we do not assume that the moduli of continuity are necessarily H\"older continuous. But, if in particular we have that $\gamma^{-1}$ is an H\"older continuous function, then we obtain the well-known $C^{1,\alpha}$-regularity of solutions. Hence, Theorem \ref{theo_1} extends, for instance, the works of \cite{Attouchi-Ruosteenoja2020} in the degenerate case. 

\subsection{Auxiliary results}

This section presents a few auxiliary results that are needed in this work. We start by stating a lemma that will help us prove the compactness of solutions.

\begin{lemma}\label{Ishii-Lions-Crandall}
Let $\Omega$ be a locally compact subset of $\mathbb{R}^d$. Let $B\subset B_1$, $u,v\in C(B_1)$ and $\psi$ be twice continuously differentiable in $B\times B$. Define
\[
w(x,y)\coloneqq u(x)+v(y) \quad \forall (x,y)\in B\times B,
\]
and assume that $v-\psi$ attains the maximum at $(x_0,y_0)\in B\times B$. Then for each $\kappa>0$ with $\kappa D^2\psi(x_0,y_0)<I$, there exists $X,Y\in \mathcal{S}(d)$ such that
\[
\left(D_x\psi(x_0,y_0),X\right)\in \overline{\mathcal{J}}^{2,+}u(x_0), \quad \left(D_y\psi(x_0,y_0),Y\right)\in \overline{\mathcal{J}}^{2,+}v(y_0),
\]
Moreover, the block diagonal matrix with entries $X$ and $Y$ satisfies 
\[
-\frac{1}{\kappa}I\leq 
\begin{pmatrix}
X & 0\\
0 & Y
\end{pmatrix} 
\leq \left(I-\kappa D^2\psi(x_0,y_0)\right)^{-1}D^2\psi(x_0,y_0).
\]
\end{lemma}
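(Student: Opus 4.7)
The plan is to follow the classical scheme of Crandall--Ishii--Lions, of which this lemma is essentially a restatement: regularize $u$ and $v$ by sup-convolution so that they become semiconvex (hence twice differentiable almost everywhere by Alexandrov's theorem), apply the second-order necessary condition for a maximum to the regularized functions, and pass to the limit to recover the result for merely continuous $u,v$.

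First I would reduce to a strict maximum by adding a small quartic penalty $|x-x_0|^4 + |y-y_0|^4$ to $\psi$, which does not alter $D^2\psi(x_0,y_0)$ but forces any perturbed maximizer to lie near $(x_0,y_0)$. Then I would introduce the sup-convolutions
\[
u^{\eps}(x) \coloneqq \sup_{z}\left\{u(z) - \tfrac{1}{2\eps}|x-z|^2\right\}, \qquad v^{\eps}(y) \coloneqq \sup_{z}\left\{v(z) - \tfrac{1}{2\eps}|y-z|^2\right\},
\]
which are $\eps^{-1}$-semiconvex and converge uniformly to $u$ and $v$. The perturbed function $u^\eps(x) + v^\eps(y) - \psi(x,y)$ then attains its maximum at some $(x_\eps,y_\eps)\to (x_0,y_0)$, and Jensen's lemma allows me to perturb $\psi$ again infinitesimally so that the maximizer occurs at a point of twice differentiability of both $u^\eps$ and $v^\eps$.

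At such a point the first-order identities $Du^\eps(x_\eps)=D_x\psi$, $Dv^\eps(y_\eps)=D_y\psi$ hold, and the second-order necessary condition for a maximum yields the raw matrix inequality
\[
\begin{pmatrix} D^2 u^\eps(x_\eps) & 0 \\ 0 & D^2 v^\eps(y_\eps) \end{pmatrix} \leq D^2\psi(x_\eps,y_\eps),
\]
together with the semiconvex lower bound $\geq -\eps^{-1}I$. A standard compactness argument gives a subsequential limit $(X,Y)\in \mathcal{S}(d)\times\mathcal{S}(d)$, and passing to closure yields the jet memberships. The sharper upper bound $(I-\kappa D^2\psi)^{-1}D^2\psi$ in place of the naive $D^2\psi$ is extracted by a quadratic-form manipulation: writing the inequality against test vectors $(\xi,\eta)$ and optimizing against a Schur-complement substitution, one inverts the coupling block $I-\kappa D^2\psi$ (which is well defined precisely by the hypothesis $\kappa D^2\psi < I$) to obtain the refined bound.

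The main obstacle is the final matrix-algebra step, which is where the precise form of the upper bound is obtained; it is sensitive to the block structure of $w(x,y)=u(x)+v(y)$ and the particular normalization of $\kappa$, and it must be carried out carefully enough that the bound survives the $\eps\to 0$ limit together with the convergence $(x_\eps,y_\eps)\to(x_0,y_0)$ and the closure of the subjets. Since each ingredient is classical, in a complete writeup I would state Alexandrov's theorem and Jensen's lemma as black boxes and refer to \cite{crandall1992user} for the details of the matrix identity, restating only what is needed to verify the precise form appearing in the lemma.
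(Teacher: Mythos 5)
The paper does not prove this lemma at all: it is recorded as a classical auxiliary result, namely the Crandall--Ishii maximum principle for semicontinuous functions (the ``theorem of sums''), and the paper implicitly defers to \cite{crandall1992user} for its proof. (Note also that the statement has a typo --- ``$v-\psi$'' should read ``$w-\psi$''.) Your sketch correctly reconstructs the standard proof scheme: sup-convolution to gain semiconvexity, Alexandrov's theorem for almost-everywhere second differentiability, Jensen's lemma to land the perturbed maximum at a point of twice differentiability, passage to the limit in the closure of the subjets. As an outline of that classical argument, it is sound.

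The one step where you explicitly leave a gap is also the one step where your description is not quite right. You claim the refined upper bound $(I-\kappa D^2\psi(x_0,y_0))^{-1}D^2\psi(x_0,y_0)$ is obtained by ``optimizing against a Schur-complement substitution'' applied to the raw inequality $\operatorname{diag}(D^2u^\eps(x_\eps),D^2v^\eps(y_\eps))\le D^2\psi(x_\eps,y_\eps)$. That raw inequality, on its own, cannot produce the factor $(I-\kappa D^2\psi)^{-1}$: the sharp form is not a post-hoc algebraic rewriting of the second-order necessary condition. In the actual proof the parameter $\kappa$ is identified with the sup-convolution parameter $\eps$, and the refined bound comes from a separate elementary matrix lemma about semiconvex functions attaining a maximum --- one that exploits both $D^2u^\eps\ge -\eps^{-1}I$ and a constraint coming from the sup-convolution structure itself, not merely the sign of the Hessian of $u^\eps+v^\eps-\psi$. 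Deferring that identity to \cite{crandall1992user} is reasonable and matches what the paper does by citing rather than proving the lemma, but be aware that the final step is not the Schur-complement manipulation you sketch.
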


\begin{remark}[Scaling properties]
Throughout the paper, we assume a smallness condition on certain quantities. We want to reinforce that these conditions are not restrictive. This means that, without loss of generality, we can assume that $u$ is a normalized solution to the equation \eqref{1} and that the source term satisfies a smallness condition. Indeed, if $u\in C(B_1)$ is a bounded viscosity solution to \eqref{1}, then for every $\varepsilon>0$ the function 
\[
w(x)\coloneqq\frac{u(x)}{\|u\|_{L^{\infty}(B_1)}+\varepsilon^{-1}\|f\|_{L^{\infty}(B_1)}},
\]
is such that $\|w\|_{L^{\infty}(B_1)}\leq 1$, and solve
\begin{equation*}
 -\tilde{\sigma}(|Dw(x)|)\NDelta w(x)=\tilde{f}(x)\quad \text{in} \quad B_1,   
\end{equation*}
where 
\[
\tilde{f}(x):=\frac{f(x)}{\kappa } \quad \text{and} \quad \tilde{\sigma}(t)\coloneqq \sigma\left(\kappa t\right),
\]
for $\kappa\coloneqq \|w\|_{L^{\infty}(B_1)}+\varepsilon^{-1}\|f\|_{L^{\infty}(B_1)}$. In particular, we have $\|\tilde{f}\|_{L^{\infty}(B_1)}\leq\varepsilon$. Now, we need to check that $\tilde{\sigma}$ satisfy $A\ref{Asigma}$ and $A\ref{Adini}$. For that, observe
\[
\tilde{\sigma}^{-1}(t)\coloneqq\frac{1}{\kappa }\sigma^{-1}\left(t\right),
\]
since
\[
\tilde{\sigma}^{-1}(\tilde{\sigma}(t))
=
\tilde{\sigma}^{-1}\left(\sigma\left(\kappa t\right)\right)
=
\frac{1}{\kappa }\sigma^{-1}\left(\sigma\left(\kappa t\right)\right)=t.
\]
Moreover, for any $\kappa >1$ it follows that
\[
\int_0^{1}\frac{\tilde{\sigma}^{-1}(t)}{t}dt
=
\int_0^{1}\frac{1}{\kappa }\frac{\sigma^{-1}(t)}{t}dt
\leq
\int_0^{1}\frac{\sigma^{-1}(t)}{t}dt
\]
and once that $\sigma$ is non-decreasing, we obtain
\[
\tilde{\sigma}(1)=\sigma\left(\kappa \right)\geq\sigma(1)\geq 1.
\]
\end{remark}
\subsection{Non-collapsing sets}\label{crafting_sequences}

At this point, for the completeness of the work, we construct a sequence of numbers $(\mu_n)_{n\in \mathbb{N}} \in \N$ and a special family of degeneracy laws ${\mathcal N} = \{\sigma_0, \sigma_1, \sigma_2,\cdots, \sigma_n, \cdots,\}$ that are essential in the proof of Theorem \ref{theo_1}. We begin with a technical lemma, whose proof can be found in \cite[Lemma 1]{APPT2022}. 

\begin{lemma}\label{c_0-in l_1}
Given any sequence of summable numbers $(a_n)_{n\in\mathbb{N}}\in \ell_1$ and $\varepsilon,\delta>0$, there exists a sequence $(c_n)_{n\in\mathbb{N}} \in c_0$, such that
\[
(b_n)_{n\in\mathbb{N}}:=\left(\frac{a_n}{c_n}\right)_{n\in\mathbb{N}}\in \ell_1, \quad \max_{n\in\mathbb{N}}|c_n|\leq \frac{1}{\varepsilon}
\]
and
\[
\varepsilon\left(1-\frac{\delta}{2}\right)\sum_{n=1}^{\infty}|a_n|\leq\sum_{n=1}^{\infty}|b_n|\leq\varepsilon\left(1+\delta\right)\sum_{n=1}^{\infty}|a_n|.
\]
\end{lemma}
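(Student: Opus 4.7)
The plan is to construct $(c_n)$ in a piecewise-constant fashion: on an initial block of indices I saturate the bound by setting $c_n = 1/\varepsilon$, so that the corresponding weights $b_n = a_n/c_n = \varepsilon\, a_n$ contribute exactly $\varepsilon S$, where $S := \sum_n |a_n|$. To force $c_n \to 0$ and hence $(c_n)\in c_0$, I then let $c_n$ decrease, block by block, along a sparse partition $N_1 < N_2 < \cdots$ of $\mathbb{N}$, whose sparseness I calibrate from the tail summability of $(a_n)$. The calibration has to be delicate enough that, although $1/c_n \to \infty$, each tail block still contributes a tiny amount to $\sum_n b_n$, yielding the upper bound $\varepsilon(1+\delta)\, S$.

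Concretely, after replacing $a_n$ by $|a_n|$ I may assume $a_n \geq 0$; the case $S=0$ is trivial (any positive $c_n \to 0$ with $c_n \leq 1/\varepsilon$ works), so suppose $S>0$. Using $(a_n) \in \ell^1$, for each $k \in \mathbb{N}$ I choose $N_k$ with $N_1 < N_2 < \cdots \to \infty$ such that
\[
\sum_{n \geq N_k} a_n \,\leq\, \frac{\delta\, S}{2\, k\, 2^{k}},
\]
and define
\[
c_n \;:=\; \begin{cases} \dfrac{1}{\varepsilon} & \text{if } n < N_1, \\[4pt] \dfrac{1}{k\,\varepsilon} & \text{if } N_k \leq n < N_{k+1}. \end{cases}
\]
Then $0 < c_n \leq 1/\varepsilon$ and $c_n \to 0$, so $(c_n) \in c_0$. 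Since $c_n \leq 1/\varepsilon$, one has $b_n \geq \varepsilon a_n$ for every $n$, which immediately gives $\sum_n b_n \geq \varepsilon S \geq \varepsilon(1-\delta/2)\, S$.

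For the upper bound, I split the sum across the blocks and invoke the choice of $N_k$:
\[
\sum_n b_n \;=\; \varepsilon\!\!\sum_{n < N_1}\! a_n \;+\; \varepsilon \sum_{k \geq 1} k\!\!\sum_{N_k \leq n < N_{k+1}}\! a_n \;\leq\; \varepsilon S + \varepsilon \sum_{k \geq 1} k\sum_{n \geq N_k} a_n \;\leq\; \varepsilon S + \varepsilon \sum_{k \geq 1} \frac{\delta\, S}{2 \cdot 2^{k}} \;=\; \varepsilon\Bigl(1 + \tfrac{\delta}{2}\Bigr) S,
\]
which is bounded by $\varepsilon(1+\delta)\, S$ and simultaneously shows $(b_n) \in \ell^1$.

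The only delicate point, and the one I expect to require the most care, is the joint calibration: forcing $c_n \to 0$ inflates $1/c_n$ on tail blocks and threatens to blow up $\sum_n b_n$, while I still need the overshoot above $\varepsilon S$ to remain bounded by $\varepsilon\,\delta\, S$. The weighted telescoping choice of $N_k$ with $\delta S/(2k\,2^{k})$ reconciles these demands; any summable weight $w_k$ with $\sum_k k\, w_k < \infty$ would work, but this particular one collapses the estimate into a clean geometric series.
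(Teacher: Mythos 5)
Your proof is correct. Note that the paper does not supply its own proof of this lemma; it simply defers to \cite[Lemma 1]{APPT2022}, so there is no argument in the manuscript to compare against. Your block-wise construction is sound: the piecewise-constant choice $c_n = 1/(k\varepsilon)$ on $[N_k, N_{k+1})$ forces $c_n \to 0$ while staying $\leq 1/\varepsilon$, the lower bound $\sum |b_n| \geq \varepsilon \sum |a_n|$ follows trivially from $c_n \leq 1/\varepsilon$, and the calibration $\sum_{n \geq N_k} |a_n| \leq \delta S /(2k\,2^{k})$ ensures the excess over $\varepsilon S$ is controlled by the geometric series $\sum_k 2^{-k}$, yielding the upper bound $\varepsilon(1+\delta/2)S \leq \varepsilon(1+\delta)S$. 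The trivial case $S = 0$ is handled correctly, and replacing $a_n$ by $|a_n|$ is legitimate since the $c_n$ are positive. This is a clean, self-contained argument; whether it matches the approach of \cite{APPT2022} cannot be determined from this paper alone.
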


As shown in \cite{APPT2022}, the Dini continuity of $\sigma^{-1}$ plays a key role in establishing the $C^1$-regularity of viscosity solutions to \eqref{1}. This condition makes it possible to construct the so-called {\it non-collapsing sets} of moduli of continuity, a concept introduced in \cite{APPT2022} to analyse the regularity properties for viscosity solutions of fully nonlinear elliptic equations such as \eqref{eq_degfull}. In what follows, we recall some relevant definitions and additional properties related to non-collapsing sets.

\begin{definition}[\cite{APPT2022}, Definition 3]\label{def_noncolapsing}
We say that a set $\Xi$ of moduli of continuity defined over an interval $I \in {\mathcal I}$ is a non-collapsing set if for all sequences $(g_n)_{n \in \N} \subset \Xi$, and all scalars $(a_n)_{n \in \N} \subset I$, we have
\[
g_n(a_n) \to 0 \quad \mbox{implies} \quad a_n \to 0.
\]
The set ${\mathcal I}$ is defined as ${\mathcal I} := \{(0, T)\,|\, 0< T< \infty\}\cup\{\R_0^+\}$.
\end{definition}

\begin{definition}[\cite{APPT2022}, Definition 5] We say that a sequence of moduli of continuity $(\sigma_n)_{n \in N}$ is shored-up if we can find a sequence of positive numbers $(m_n)_{n \in N}$ such that $m_n \to 0$ and
\[
\inf_n\sigma_n(m_n) >0 \quad \mbox{ for all} \quad n \in \N.
\]
\end{definition}

\begin{proposition}[\cite{APPT2022}, Proposition 5]\label{prop_sn}
If a sequence of moduli of continuity $(\sigma_n)_{n \in \N}$ is shored-up, then $\Xi = \cup_{n \in \N}\{\sigma_n\}$ is non-collapsing.
\end{proposition}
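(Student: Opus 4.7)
The plan is to argue by contradiction. Suppose $\Xi$ were not non-collapsing: there would exist $(g_n) \subset \Xi$ and $(a_n) \subset I$ with $g_n(a_n) \to 0$ yet, after passing to a subsequence, $a_n \geq \epsilon$ for some fixed $\epsilon > 0$. Since $\Xi = \bigcup_{k\in\N}\{\sigma_k\}$, one may write $g_n = \sigma_{k_n}$, and the whole problem reduces to controlling the index sequence $(k_n)$.

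The shored-up hypothesis supplies $c > 0$ and a sequence $m_k \to 0$ with $\sigma_k(m_k) \geq c$; combined with the monotonicity of each $\sigma_k$, this upgrades to $\sigma_k(t) \geq c$ for every $t \geq m_k$. I would then split on the behavior of $(k_n)$. If $(k_n)$ admits a subsequence with $k_n \to \infty$, then along that subsequence $m_{k_n} < \epsilon$ eventually, so monotonicity forces $g_n(a_n) = \sigma_{k_n}(a_n) \geq \sigma_{k_n}(m_{k_n}) \geq c$, contradicting $g_n(a_n) \to 0$.

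Otherwise $(k_n)$ is eventually bounded, and pigeonhole produces a further subsequence on which $k_n = k^*$ is constant. Then $\sigma_{k^*}(a_n) \to 0$ with $a_n \geq \epsilon$, so monotonicity yields $\sigma_{k^*}(\epsilon) = 0$. Because the inverse $\sigma_k^{-1}$ must be a genuine function (as is used throughout the paper in A\ref{Adini} and in the very definition of the non-collapsing framework), each $\sigma_k$ is strictly positive on $(0,\infty)$. This gives $\sigma_{k^*}(\epsilon) > 0$ and the desired contradiction.

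The main obstacle I anticipate is precisely this second case: it does not invoke the shored-up property at all. The tail $k_n \to \infty$ is controlled by the uniform lower bound $c$ at the points $m_k$, whereas any fixed "bad" index $k^*$ must be handled by the intrinsic strict positivity of that single modulus. A cleaner one-pass packaging is to fix $\delta > 0$, choose $N$ with $m_k < \delta$ for $k \geq N$, and set $c^* := \min\{c,\,\min_{k<N}\sigma_k(\delta)\} > 0$; then $g_n(a_n) < c^*$ eventually forces $a_n < \delta$ in either regime, and since $\delta$ was arbitrary one concludes $a_n \to 0$.
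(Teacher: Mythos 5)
Your argument is correct, and the paper does not give its own proof of this proposition (it is cited from \cite{APPT2022}), so there is no internal proof to compare against; your reasoning stands on its own.

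Two remarks. First, you are right that the constant-index case does not invoke the shored-up property at all, and this is not cosmetic: if one allows a modulus of continuity to vanish on an interval $[0,t_0]$ with $t_0>0$, the proposition is actually false. For instance, take $\sigma_1(t)=\max\{0,\,t-1\}$ and $\sigma_n(t)=nt$ for $n\geq 2$. With $m_1=2$ and $m_n=1/n$ for $n\geq 2$ one has $m_n\to 0$ and $\sigma_n(m_n)=1$ for every $n$, so the family is shored-up, yet $g_n\equiv\sigma_1$ and $a_n\equiv 1/2$ give $g_n(a_n)\equiv 0\to 0$ while $a_n\not\to 0$. Your appeal to the well-definedness of $\sigma_k^{-1}$, which forces $\sigma_k(t)>0$ for all $t>0$, is therefore a genuinely necessary hypothesis, not a technical footnote. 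It is in force in the present setting because the members of $\mathcal{N}$ built in Section~\ref{crafting_sequences} are positive rescalings $\sigma_n(t)=\tfrac{\prod_j\mu_j}{r^n}\,\sigma\bigl((\prod_j\mu_j)\,t\bigr)$ of the ambient $\sigma$, which inherits strict positivity on $(0,\infty)$ from the invertibility assumed in A\ref{Adini}.

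Second, the ``one-pass packaging'' you sketch at the end, namely fixing $\delta>0$, choosing $N$ with $m_k<\delta$ for $k\geq N$, setting $c^*:=\min\bigl\{c,\,\min_{k<N}\sigma_k(\delta)\bigr\}>0$, and reading off $a_n<\delta$ once $g_n(a_n)<c^*$, is indeed cleaner than the dichotomy on $(k_n)$: it avoids repeated subsequence extraction and delivers the quantitative modulus directly. Note, however, that the positivity of $\min_{k<N}\sigma_k(\delta)$ still uses $\sigma_k(\delta)>0$ for the finitely many small indices, so the strict-positivity requirement has been relocated rather than removed. Either version of your argument is sound.
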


Now, let us begin with the construct of $(\mu_n)_{n\in\mathbb{N}}$, for this, consider $C>0$ and $\beta\in (0,1)$ be universal constants and define 
\[
\gamma(t)\coloneqq t\sigma(t) \quad \text{and} \quad \omega(t)\coloneqq\gamma^{-1}(t).
\]
Next, we choose the first term of the sequence by selecting $0<\rho<\mu_1<1$ according to the following alternatives:

\begin{itemize}
\item[i)] If $\left(t^{\beta}/\omega(t)\right)\to 0$ as $t\to\infty$, we take $0<\rho\ll 1$ such that
\[
2C\rho^{\beta}=\omega(\rho)\coloneqq\mu_1>\rho.
\]

\item[ii)] If the degeneration law is stronger than $t^{\frac{1}{\beta}-1}$, which means, $\omega(t)=O(t^{\beta})$, then for a fixed $0<\alpha<\beta$, we take $0<\rho<1$ sufficiently small such that
\[
2C\rho^{\beta}=\rho^{\alpha}=\colon \mu_1>\rho.
\]
\end{itemize}

In any case,  we define the ratio 
\[
0<\theta\coloneqq\frac{\rho}{\mu_1}<1.
\]
From the assumption $A\ref{Adini}$, the sequence 
\[
(a_n)_{n\in\mathbb{N}}\coloneqq\left(\sigma^{-1}(\theta^{n})\right)_{n\in\mathbb{N}}\in \ell^1.
\]
Applying the Lemma \ref{c_0-in l_1} to $(a_n)_{n\in \mathbb{N}}$ with $0<\delta<1$ and $\varepsilon\coloneqq 1/(1+\delta)$, we are able to find a sequence $(c_n)_{n\in\mathbb{N}} \in c_0$ of positive numbers satisfying 
\[
\varepsilon\left(1-\frac{\delta}{2}\right)\sum_{i=n}^{\infty}\sigma^{-1}(\theta^n)\leq\sum_{n=1}^{\infty}\frac{\sigma^{-1}(\theta^n)}{c_n}\leq\sum_{n=1}^{\infty}\sigma^{-1}(\theta^n).
\]
At this point, using $(c_n)_{n\in\mathbb{N}}$, we construct the family $\mathcal{N}\coloneqq\{\sigma_0(t),\sigma_1(t),...,\sigma_n(t),...\}$ of modulus of continuity given by the rule:
\[
\sigma_n(t)=\frac{\prod_{j=1}^{n}\mu_j}{r^n}\sigma\left(\left(\prod_{j=1}^n\mu_j\right)\cdot t\right),
\]
where $\mu_1$ is as above, and for $n\geq 2$ the value $\mu_{n}$ is chosen  $r<\mu_1\leq\mu_2\leq\cdots\leq\mu_n$ according to one of the following conditions:
\begin{itemize}
    \item[i)] Either $\mu_n>\mu_{n-1}$ is chosen such that $\sigma_{n}(c_n)=1$;
    \item[ii)] Or $\mu_n=\mu_{n-1}$ is such that $\sigma_{n}(c_n)\geq 1$. 
\end{itemize}
From Proposition \ref{prop_sn}, we conclude that ${\mathcal N}$ is a non-collapsing set.

\section{Compactness of the solutions}

In this section, we prove a compactness result of the solutions to \eqref{1}. In fact, since our arguments involve scaling, we need to show such a property for a larger class of equations. Namely, we work with solutions to

\begin{equation}\label{eq_scal}
-\sigma(|Du+\xi|)\left[\Delta u +(p-2)\left\langle D^2u\dfrac{Du + \xi}{|Du + \xi|}, \dfrac{Du + \xi}{|Du + \xi|} \right\rangle\right] = f(x) \quad \mbox{ in } \quad B_1, 
\end{equation}
where $\xi \in \R^d$. We begin by proving a couple of auxiliary lemmas that will help us to establish the compactness result.

\begin{lemma}\label{lem_compac1}
Let $u\in C(B_1)$ be a normalized viscosity solution to \eqref{eq_scal}, with $\|f\|_{L^{\infty}(B_1)}\leq 1$. There exists $\Gamma>0$ such that if $|\xi|>\Gamma$, then $u$ belongs to  $C^{\beta}(B_{1/2})$ for some universal $\beta\in(0,1)$. Moreover, there exists a constant $C>0$ such that 
\begin{equation*}
\|u\|_{C^{\beta}(1/2)}\leq C.
\end{equation*}
\end{lemma}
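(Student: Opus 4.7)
The plan is to apply the classical Ishii--Lions doubling-of-variables method to the degenerate operator in \eqref{eq_scal}. Fix $x_0 \in B_{1/2}$ and, for $\beta\in(0,1)$ and $L, M>0$ to be chosen, consider
\[
\Phi(x,y):=u(x)-u(y)-L|x-y|^{\beta}-M\bigl(|x-x_0|^{2}+|y-x_0|^{2}\bigr)
\]
on $\overline{B}_{3/4}\times\overline{B}_{3/4}$. The constant $M$ is fixed sufficiently large (in terms of $\|u\|_{L^\infty(B_1)}\le 1$) to force the maximum of $\Phi$ to be attained in the interior. The argument proceeds by contradiction: if the H\"older estimate fails, then one has $\sup\Phi>0$ for a suitable large $L$, with the maximum $(\bar x,\bar y)$ satisfying $\bar x\neq \bar y$ and $|\bar x-\bar y|^{\beta}\le 2/L$.

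I would then apply Lemma \ref{Ishii-Lions-Crandall} with $\psi(x,y):=L|x-y|^{\beta}+M(|x-x_0|^{2}+|y-x_0|^{2})$ to produce matrices $X,Y\in\mathcal{S}(d)$ and jet pairs realizing the gradients $p_1:=\eta+2M(\bar x-x_0)$ and $p_2:=\eta-2M(\bar y-x_0)$, where $\eta:=L\beta\,r^{\beta-2}(\bar x-\bar y)$ and $r:=|\bar x-\bar y|$; the block form of the matrix inequality yields $\langle (X-Y)e,e\rangle \le -c\,L\,r^{\beta-2}$ along $e:=(\bar x-\bar y)/r$. Substituting the jets into the viscosity sub- and supersolution inequalities for \eqref{eq_scal} and subtracting gives
\[
\sigma(|p_1+\xi|)\,\mathcal{N}(p_1+\xi,X)-\sigma(|p_2+\xi|)\,\mathcal{N}(p_2+\xi,Y) \ge -2\|f\|_{L^\infty(B_1)},
\]
where $\mathcal{N}(q,A):=\operatorname{tr}\bigl((I+(p-2)\hat q\otimes\hat q)A\bigr)$ is uniformly elliptic with constants $\min\{1,p-1\}$ and $\max\{1,p-1\}$. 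The role of the hypothesis $|\xi|>\Gamma$ is to ensure the non-degeneracy $\sigma(|p_i+\xi|) \ge c_0 > 0$; by the normalization $\sigma(1)\ge 1$ and monotonicity of $\sigma$, it suffices to force $|p_i+\xi|\ge 1$. Once this is in hand, a standard Pucci-type estimate on $X-Y$ extracts a term of order $c_0\,L\,r^{\beta-2}$ on the left-hand side, contradicting the bounded right-hand side for $L$ universal and sufficiently large.

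The main obstacle is precisely the non-degeneracy step, namely preventing the cancellation $p_i+\xi\approx 0$. A case analysis on $|\eta|=L\beta r^{\beta-1}$ versus $|\xi|$ handles the extreme regimes: when $|\eta|\ll|\xi|$ one has $|p_i+\xi|\gtrsim |\xi|$ by direct estimate, and when $|\eta|\gg|\xi|$ the triangle inequality yields $|p_i+\xi|\gtrsim |\eta|$. The delicate intermediate regime $|\eta|\sim |\xi|$, corresponding to $r\sim(L/|\xi|)^{1/(1-\beta)}$, is the crux of the proof and requires careful balancing of the potential smallness of $\sigma(|p_i+\xi|)$ against the very large matrix gain $Lr^{\beta-2}$; it is controlled by choosing $\Gamma$ sufficiently large (in terms of the universal constants $p$, $d$, $L$, $M$) and exploiting the geometric constraint on $(\bar x-\bar y)$ dictated by the test function. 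Once the non-degeneracy is settled, the remainder of the proof is a routine Ishii--Lions computation yielding the sought $C^\beta$ estimate with universal constants.
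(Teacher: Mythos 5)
The proposal follows the same Ishii--Lions doubling strategy as the paper, and correctly identifies the non-degeneracy step --- ruling out $p_i+\xi\approx 0$ --- as the crux. However, precisely at that point the argument is not carried through: the intermediate regime $|\eta|\sim|\xi|$ is declared to be ``controlled by choosing $\Gamma$ sufficiently large \dots and exploiting the geometric constraint on $(\bar x-\bar y)$,'' but no mechanism is given, and the naive reading does not work. Merely taking $\Gamma$ large does not remove the cancellation: for any prescribed $\Gamma$ one can take $\xi=-L\beta r^{\beta-2}(\bar x-\bar y)$ with $r$ small, so that $|\xi|=L\beta r^{\beta-1}>\Gamma$ and yet $p_i+\xi\approx 0$. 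So the proposal, as written, leaves a genuine gap in the step on which the whole lemma turns.

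What the paper actually uses is a quantitative interplay that the proposal misses. First, because $\omega(t)=t^{\beta}$ with $\beta<1$ and the contact points satisfy $|x_0-y_0|\le 2$, the gradient of the test function has a definite \emph{lower} bound
\[
|\xi_{x_0}|\ge \tfrac12 C_1\beta|x_0-y_0|^{\beta-1}\ge \tfrac12 C_1\beta\,2^{\beta-1},
\]
and this is made $\ge 1$ by choosing $C_1>2^{9-\beta}/(r\beta)$. Second, the threshold is taken to be $\Gamma=\tfrac94 C_1\beta|x_0-y_0|^{\beta-1}$, i.e.\ a fixed multiple of the same gradient scale, so that $|\xi|>\Gamma$ forces
\[
|\nu_i|=|\xi_{x_0}+\xi|\ge |\xi|-|\xi_{x_0}|\ge\tfrac14 C_1\beta|x_0-y_0|^{\beta-1}\ge 1,
\]
and hence $\sigma(|\nu_i|)\ge\sigma(1)\ge 1$. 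In other words, the regime you call intermediate is not ``balanced'' against the matrix gain at all --- it is excluded outright because the uniform lower bound on $|\xi_{x_0}|$ (coming from $\beta<1$, $|x_0-y_0|\le 2$ and the size of $C_1$) together with $|\xi|>\Gamma$ leaves the difference $|\xi|-|\xi_{x_0}|$ above the normalization scale $1$ of $\sigma$. Your proposal correctly sets up the Ishii--Lions machinery but does not supply this observation, which is the decisive step; without it, the viscosity inequalities cannot be divided by $\sigma(|p_i+\xi|)$ with a universal constant and the contradiction does not close.
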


\begin{proof}
Let us start by fixing $\beta\in(0,1)$ and defining the modulus of continuity 
$\omega:\mathbb{R}^+\longrightarrow\mathbb{R}^+$ given by $\omega(t):=t^{\beta}$. Let $0<r\ll 1$, and for a fixed $z\in B_{r/2}$ define the constant
\[
\mathbf{M}:=\sup_{x,y\in B_r}\left(u(x)-u(y)-C_1\omega(|x-y|)-C_2(|x-z|^2+|y-z|^2)\right).
\]
Our goal is to show that $\mathbf{M}\leq 0$ for some constants $C_1, C_2>0$. We will argue by contradiction; for this reason, suppose that for every $C_1>0$ and $C_2>0$, there exists a point $z\in B_{r/2}$ for which $\mathbf{M}>0$. \\

\noindent{\bf Step 1:} {\it Choosing the constant $C_2$}.\\
Consider the following auxiliary functions 
\[
\psi(x,y)\coloneqq C_1\omega(|x-y|)+C_2(|x-z|^2+|y-z|^2) \quad \text{and} \quad \varphi(x,y):= u(x)-u(y)-\psi(x,y).
\]
Let $(x_0,y_0) \in \bar{B}_r\times\bar{B}_r$ be such that $\max_{\bar{B}_r\times\bar{B}_r} \varphi(x, y) = \varphi(x_0, y_0)$ and notice that $x_0\neq y_0$. In fact, if $x_0=y_0$, we would have
\[
\mathbf{M}\leq 0,
\]
for every positive constant $C_2$. Hence, 
\[
\varphi(x_0,y_0)=\max_{\bar{B}_r\times\bar{B}_r}\varphi(x,y)=\mathbf{M}>0.
\]
Since $u$ is a normalized solution, we obtain
\[
\psi(x_0,y_0) < u(x_0)-u(y_0) \leq \osc_{B_1}u \leq 2,
\]
which in particular implies
\[
C_1\omega(|x_0-y_0|)+C_2(|x_0-z|^2+|y_0-z|^2).
\]
We now choose the constant $C_2$ to ensure that $(x_0,y_0)\in B_r\times B_r$. For this purpose, it is sufficient to take $C_2\coloneqq\left(32/r\right)^2$. Indeed notice that
\[
 \left(\frac{32}{r}\right)^2|x_0-z|^2
 =
 C_2|x_0-z|^2\leq 2
 \quad \text{and} \quad
\left(\frac{32}{r}\right)^2|y_0-z|^2
=
C_2|y_0-z|^2\leq 2,
\]
which implies
\begin{equation}\label{r/16}
|x_0-z|\leq\frac{r}{16} \quad \text{and} \quad |y_0-z|\leq \frac{r}{16}.
\end{equation}

\noindent{\bf Step 2:} {\it Applying Lemma \ref{Ishii-Lions-Crandall}}. \\
We compute 
\[
 D_{x}\psi(x_0,y_0)=C_1\omega'(|x_0-y_0|)\frac{x_0-y_0}{|x_0-y_0|}+2C_2(x_0-z)
\]
and
\[
-D_{y}\psi(x_0,y_0)=C_1\omega'(|x_0-y_0|)\frac{x_0-y_0}{|x_0-y_0|}-2C_2(y_0-z).
\]
From Lemma \ref{Ishii-Lions-Crandall}, we ensure the existence of $X,Y\in \mathcal{S}(d)$ such that 
\begin{equation}\label{block inequality}
 -\frac{2}{\kappa}
\begin{pmatrix}
I & 0\\
0 & I
\end{pmatrix} 
\leq 
\begin{pmatrix}
X & 0\\
0 & -Y
\end{pmatrix} 
\leq 
\begin{pmatrix}
B^{\kappa} & -B^{\kappa}\\
-B^{\kappa} & B^{\kappa}
\end{pmatrix},
\end{equation}
for each $\kappa>0$ such that $\kappa B<I$, where the matrix $B$ is given by (recall that $\omega(t) = t^\beta$) 
\begin{align*}
B
&=
C_1\beta|x_0-y_0|^{\beta-2}\left(I+(\beta-2)\frac{x_0-y_0}{|x_0-y_0|}\otimes\frac{x_0-y_0}{|x_0-y_0|}\right).
\end{align*}
Choosing $\kappa:=1/(2C_1\beta|x_0-y_0|^{\beta-2})>0$, we get
\[
I-\kappa B=\frac{1}{2}I-\frac{(\beta-2)}{2}\frac{x_0-y_0}{|x_0-y_0|}\otimes\frac{x_0-y_0}{|x_0-y_0|},
\]
and knowing that $x\otimes x=x\cdot x^T$, by Sherman-Morrison formula we obtain 
\begin{align*}
\left(I-\kappa B\right)^{-1}
&=
2\left[I-\frac{I\cdot\left(-\frac{\beta-2}{2}\left(\frac{x_0-y_0}{|x_0-y_0|}\right)\cdot\left(\frac{x_0-y_0}{|x_0-y_0|}\right)^{T}\right)\cdot2I}{1+\left(\frac{x_0-y_0}{|x_0-y_0|}\right)^{T}\cdot2I\cdot\left(-\frac{(\beta-2)}{2}\frac{x_0-y_0}{|x_0-y_0|}\right)}\right]\\
&=
2\left[I-\frac{(2-\beta)}{(3-\beta)}\left(\frac{x_0-y_0}{|x_0-y_0|}\right)\cdot\left(\frac{x_0-y_0}{|x_0-y_0|}\right)^{T}\right].
\end{align*}
As a consequence,
\[
B^{\kappa}=(I-\kappa B)^{-1}B= 2C_1\beta|x_0-y_0|^{\beta-2}\left(I-2\frac{2-\beta}{3-\beta}\frac{x_0-y_0}{|x_0-y_0|}\otimes\frac{x_0-y_0}{|x_0-y_0|}\right).
\]

\noindent{\bf Step 3:} {\it Recovering the information on the eigenvalues of $X-Y$}.\\
First, let us take any vector $(\xi,\xi)\in \mathbb{R}^{2d}$ with $|\xi|=1$, and observe that by \eqref{block inequality} we have
\[
\begin{pmatrix}
\xi\\
\xi
\end{pmatrix}^{T}
\begin{pmatrix}
X & 0\\
0 & -Y
\end{pmatrix}
\begin{pmatrix}
\xi\\
\xi
\end{pmatrix}
\leq 
\begin{pmatrix}
\xi\\
\xi
\end{pmatrix}^{T}
\begin{pmatrix}
B^{\kappa} & -B^{\kappa}\\
-B^{\kappa} & B^{\kappa}
\end{pmatrix}
\begin{pmatrix}
\xi\\
\xi
\end{pmatrix}.
\]
Hence, $X-Y\leq 0$, and since
\begin{equation}\label{Inner_product_B}
\langle B^{\kappa} \xi, \xi \rangle = 2C_1\beta|x_0-y_0|^{\beta-2}\left(\frac{\beta-1}{3-\beta}\right)<0,
\end{equation}
we obtain
\begin{equation}\label{norma_of_X_Y}
    \|X\|,\|Y\|\leq 2C_1\beta|x_0-y_0|^{\beta-2}.
\end{equation}
Moreover, applying the estimate above to the vector $(\xi,-\xi)$, where $\xi\coloneqq(x_0-y_0)/|x_0-y_0|$ and using the previous estimate, we get
\begin{equation}\label{negative_eigenvaleu}
\langle(X-Y)\xi,\xi\rangle\leq 4\langle B^{\kappa}\xi,\xi\rangle=8C_1\beta|x_0-y_0|^{\beta-1}\left(\frac{\beta-1}{3-\beta}\right)<0,
\end{equation}
which implies that at least one eigenvalue of $X-Y$ is negative and smaller than the constant above. Throughout the remainder of the proof, we will denote this eigenvalue as $\lambda_0$.\\

\noindent{\bf Step 4:} {\it The inequality in the viscosity sense and the choice of $\Gamma$}.\\
For simplicity, let us denote 
\[
\xi_{x_0}\coloneqq D_{x}\psi(x_0,y_0) \quad \text{and} \quad \xi_{y_0}\coloneqq D_{y}\psi(x_0,y_0).
\]
Now, fix an arbitrary vector $\xi\in\mathbb{R}^d$, take
\[
\Gamma\coloneqq\left(\frac{9}{4}\right)C_1\beta|x_0-y_0|^{\beta-1},
\]
and consider the scenario where $|\xi|>\Gamma$. Observe that since $|x_0-y_0|\leq 2$, we can choose the constant $C_1>\left(2^{9-\beta}/r\beta\right)$ so that
\[
C_1>\frac{2^{9-\beta}}{r\beta}=\frac{2^{8}}{r\beta 2^{\beta-1}}\geq \frac{2^8}{r\beta|x_0-y_0|^{\beta-1}}.
\]
Hence, using the information in \eqref{r/16} and the definition of $C_2$, we obtain
\begin{align*}
\frac{C_1}{2}\beta|x_0-y_0|^{\beta-1}&>\frac{2^7}{r} =\left(\frac{32}{r}\right)^2\frac{r}{8}\\
&\geq
2C_2|x_0-z_0|.
\end{align*}
As a consequence, we estimate
\begin{equation*}\label{controle de xi_x0}
2C_1\beta|x_0-y_0|^{\beta-1}\geq|\xi_{x_0}|\geq\frac{C_1}{2}\beta|x_0-y_0|^{\beta-1},
\end{equation*}
\begin{equation*}\label{controle de xi_yo}
2C_1\beta|x_0-y_0|^{\beta-1}\geq|\xi_{y_0}|\geq\frac{C_1}{2}\beta|x_0-y_0|^{\beta-1}. 
\end{equation*}

Therefore, the choice that we make for $\Gamma$ allows us to conclude

\begin{equation}\label{controle de nu_1}
|\xi_{x_0}+\xi|\geq\frac{C_1}{4}\beta|x_0-y_0|^{\beta-1},
\end{equation}
\begin{equation}\label{controle de nu_2}
|\xi_{y_0}+\xi|\geq\frac{C_1}{4}\beta|x_0-y_0|^{\beta-1}.
\end{equation}
Consider $\nu_1\coloneqq\xi_{x_0}+\xi$ and $\nu_2\coloneqq\xi_{y_0}+\xi$. Since $u$ is a viscosity solution to \eqref{eq_scal}, we obtain the following inequalities 
\[
\sigma(|\nu_1|)\left[\tr(X+C_2I)+(p-2)\dfrac{\langle(X+C_2I)(\nu_1),(\nu_1)\rangle}{|\nu_1|^2}\right]+f(x_0)\geq 0
\]
and
\[
\sigma(|\nu_2|)\left[\tr(Y-C_2I)+(p-2)\dfrac{\langle(Y-C_2I)(\nu_2),(\nu_2)\rangle}{|\nu_2|^2}\right]+f(y_0)\leq 0,
\]
which implies,
\begin{equation}\label{ineq12}
\tr\left(A(\nu_1)(X+C_2I)\right)\geq-\frac{f(x_0)}{\sigma(|\nu_1|)}
\quad \text{and}\quad 
-\tr\left(A(\nu_2)(Y-C_2I)\right)\geq\frac{f(y_0)}{\sigma(|\nu_2|)},
\end{equation}
where for $\nu\neq 0$, $\overline{\nu}=\nu/|\nu|$ and
\[
A(\nu)\coloneqq I+(p-2)\overline{\nu}\otimes\overline{\nu}.
\]

\noindent{\bf Step 5:} {\it Estimates for the viscosity inequality}.\\
Putting together the inequalities \eqref{ineq12}, the fact that $\sigma(t)$ is non-decreasing, and $f\in L^{\infty}(B_1)$, we obtain
\begin{align*}
\tr\left(A(\nu_1)(X+C_2I)\right)-\tr\left(A(\nu_2)(Y-C_2I)\right)
&\geq
\frac{f(y_0)}{\sigma(|\nu_2|)}-\frac{f(x_0)}{\sigma(|\nu_1|)}\\
&\geq
-\dfrac{\|f\|_{L^{\infty}(B_1)}}{\sigma\left(|\frac{C_1}{4}\beta|x_0-y_0|^{\beta-1}\right)}-\dfrac{\|f\|_{L^{\infty}(B_1)}}{\sigma\left(|\frac{C_1}{4}\beta|x_0-y_0|^{\beta-1}\right)}\\
&\geq
-\dfrac{2\|f\|_{L^{\infty}(B_1)}}{\sigma(1)}\\
&\geq
-2\|f\|_{L^{\infty}(B_1)}.
\end{align*}
On the other hand, using the linearity of the Trace operator, we obtain
\begin{align*}
\tr\left(A(\nu_1)(X+C_2I)\right) - \tr\left(A(\nu_2)(Y-C_2I)\right) 
&=
\tr(A(\nu_1)X) + C_2\tr(A(\nu_1)I) - 
\left[\tr(A(\nu_2)Y) - C_2\tr(A(\nu_2)I)\right] \\
&= 
\tr(A(\nu_1)(X-Y))+C_2\left[\tr(A(\nu_1)) + \tr(A(\nu_2))\right]\\
&+
\tr\left(\left(A(\nu_1)-A(\nu_2)\right)Y\right)\\
&\coloneqq
(I)+(II)+(III).
\end{align*}
At this point, we analyse the right-hand side of the inequality above. We begin estimating $(I)$, and for that remember that the eigenvalues of $A(\nu_1)$ belong to the interval $\left[\min\{1,p-1\},\max\{1,p-1\}\right]$, and \eqref{negative_eigenvaleu} hold, so it follows that
\begin{align*}
\tr(A(\nu_1)(X-Y))&
\leq
\sum_{n=1}^d\lambda_{n}(A(\nu_1))\lambda_{n}(X-Y)\leq\min\{1,p-1\}\lambda_{0}(X-Y)\\
&\leq
8C_1\beta\min\{1,p-1\}|x_0-y_0|^{\beta-1}\left(\frac{\beta-1}{3-\beta}\right).
\end{align*}
To do the estimative of $(II)$, it is sufficiently see that  
\begin{align*}
C_2\left[\tr(A(\nu_1))+\tr(A(\nu_2))\right]&\leq C_2\left(\sum_{n=1}^d\lambda_{n}(A(\nu_1))+\sum_{n=1}^d\lambda_{n}(A(\nu_2))\right)\\
&\leq
2dC_2\max\{1,p-1\}.
\end{align*}
For the estimate of $(III)$, observe that from the definition of $A(\nu)$ we can write 
\begin{align*}
A(\nu_1)-A(\nu_2)&=(p-2)\left(\overline{\nu}_1\otimes\overline{\nu}_1-\overline{\nu}_2\otimes\overline{\nu}_2+\overline{\nu}_2\otimes\overline{\nu}_1-\overline{\nu}_2\otimes\overline{\nu}_1\right)\\
&=
(p-2)\left[(\overline{\nu}_1-\overline{\nu}_2)\otimes\overline{\nu}_1-\overline{\nu}_2\otimes(\overline{\nu}_2-\overline{\nu}_1)\right],
\end{align*}
and as a consequence, we use the information in \eqref{norma_of_X_Y} together with the fact that $|\overline{\nu}_i|=1$, to get
\begin{align*}
\tr(\left(A(\nu_1)-A(\nu_2)\right)Y)&\leq d\|Y\|\cdot\|A(\nu_1)-A(\nu_2)\|
\leq
d|p-2|\cdot\|Y\|\cdot|\overline{\nu}_1-\overline{\nu}_2|(|\overline{\nu}_1|+|\overline{\nu}_2|)\\
&\leq
2d|p-2|\cdot\|Y\|\cdot|\overline{\nu}_1-\overline{\nu}_2|\\
&\leq
4dC_1\beta|p-2|\cdot|x_0-y_0|^{\beta-2}\cdot|\overline{\nu}_1-\overline{\nu}_2|.
\end{align*}
Note that from the definitions of $\xi_{x_0}$ and $\xi_{y_0}$, we know that
\[
|\nu_1-\nu_2|=|\xi_{x_0}-\xi_{y_0}|=2C_2|(x_0-z)+(y_0-z)|\leq\frac{C_2}{4},
\]
hence, combining of the inequality above with \eqref{controle de nu_1} and \eqref{controle de nu_2} yields to
\begin{align*}
|\overline{\nu}_1-\overline{\nu}_2|&=\left(\frac{\nu_1}{|\nu_1|}-\frac{\nu_2}{|\nu_2|}\right)\leq 2\max\left\{\frac{|\nu_1-\nu_2|}{|\nu_1|},\frac{|\nu_1-\nu_2|}{|\nu_2|}\right\}\\
&\leq
2\left(\frac{|\nu_1-\nu_2|}{|\nu_1|}+\frac{|\nu_1-\nu_2|}{|\nu_2|}\right)\\
&\leq 
C_2\left(\frac{1}{|\nu_1|}+\frac{1}{|\nu_2|}\right)\\
&\leq
\frac{8C_2}{C_1\beta|x_0-y_0|^{\beta-1}}.
\end{align*}
Putting together the inequality with the last estimate, we obtain 
\[
\tr(\left(A(\nu_1)-A(\nu_2)\right)Y)\leq 32dC_2|p-2|\cdot|x_0-y_0|^{-1}.
\]
Therefore, 
\begin{align*}
\tr\left(A(\nu_1)(X+C_2I)\right) - \tr\left(A(\nu_2)(Y-C_2I)\right)
&\leq
(I)+(II)+(III)\\
&\leq
8C_1\beta\min\{1,p-1\}|x_0-y_0|^{\beta-1}\left(\frac{\beta-1}{3-\beta}\right)\\
&+
2dC_2\max\{1,p-1\}+32dC_2|p-2|\cdot|x_0-y_0|^{-1}.
\end{align*}

\noindent{\bf Step 6:} {Choosing the constant $C_1$}.\\
From Step 5, we get
\[
0\leq 8C_1\beta\min\{1,p-1\}|x_0-y_0|^{\beta-1}\left(\frac{\beta-1}{3-\beta}\right)+2\|f\|_{L^{\infty}(B_1)}+2dC_2\max\{1,p-1\}+32dC_2|p-2|\cdot|x_0-y_0|^{-1}.
\]
On the other hand, since $\beta<1$, the first term of the inequality above is negative, while the remaining terms are positive; as a consequence, choosing $C_1$ sufficiently large, i.e., 

\[
C_1>\frac{(3-\beta)\left[2\|f\|_{L^{\infty}(B_1)}+2dC_2\left(\max\{1,p-1\}+32|p-2||x_0-y_0|^{-1}\right)\right]}{4(1-\beta)\min\{1,p-1\}|x_0-y_0|^{\beta-1}},
\]
we obtain 
\[
8C_1\beta\min\{1,p-1\}|x_0-y_0|^{\beta-1}\left(\frac{\beta-1}{3-\beta}\right)+2\|f\|_{L^{\infty}(B_1)}+2dC_2\max\{1,p-1\}+32dC_2|p-2|\cdot|x_0-y_0|^{-1}<0,
\]
which is a contradiction. Therefore, $\mathbf{M}\leq 0$ for universal constants $C_1, C_2>0$, and the desired result follows, i.e., there exists a universal constant $C>0$ such that 
\[
|u(x)-u(y)|\leq C|x-y|^{\beta}.
\]
\end{proof}

Once the value $\Gamma$ is fixed, we now treat the case where $|\xi| \leq \Gamma$.

\begin{lemma}\label{lem_compac2}
Let $u \in C(B_1)$ be a normalized viscosity solution to \eqref{eq_scal}, with $\|f\|_{L^\infty(B_1)} \leq 1$. Suppose further that $|\xi| \leq \Gamma$. Then, there exist universal constants $\beta \in (0, 1)$ and $C> 0$ such that 
\[
\|u\|_{C^\beta(B_{3/4})} \leq C.
\]
\end{lemma}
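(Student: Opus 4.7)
The plan is to mirror the Ishii--Lions doubling argument in the proof of Lemma \ref{lem_compac1}, exploiting the fact that the lower bound
\[
|\xi_{x_0}| \geq \frac{C_1}{2}\beta|x_0-y_0|^{\beta-1}
\]
that arises at the doubling maximizer grows without bound as $C_1$ is chosen large. A perturbation by $\xi$ with $|\xi| \leq \Gamma$ will therefore be absorbed, and $|\xi_{x_0} + \xi|$ will retain essentially the same lower bound as $|\xi_{x_0}|$. This reduces the situation to the one handled in Lemma \ref{lem_compac1}, where the normalization $\sigma(|\xi_{x_0} + \xi|) \geq \sigma(1) \geq 1$ is available and controls the viscosity inequalities.

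Concretely, I would fix $r \in (3/4, 1)$ and $z \in B_{r/2}$, set up the same auxiliary function $\psi(x,y) = C_1|x-y|^\beta + C_2(|x-z|^2 + |y-z|^2)$ on $B_r \times B_r$, and argue by contradiction that the supremum $\mathbf{M} := \sup(u(x) - u(y) - \psi(x,y)) > 0$. Choosing $C_2 = (32/r)^2$ as before pushes the maximizer $(x_0, y_0)$ into $B_r \times B_r$ with $x_0 \neq y_0$, and Steps 1--3 of Lemma \ref{lem_compac1} carry over unchanged, producing matrices $X, Y \in \mathcal{S}(d)$ satisfying the Ishii--Lions block inequality together with the crucial negative eigenvalue bound \eqref{negative_eigenvaleu}. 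At the analogue of Step 4, instead of invoking $|\xi| > \Gamma$, I would use that $|x_0 - y_0| \leq 2r$ combined with $\beta - 1 < 0$ gives $|\xi_{x_0}| \geq (C_1\beta/2)(2r)^{\beta-1}$. Selecting $C_1$ large enough so that this lower bound exceeds $2\Gamma$, the triangle inequality yields
\[
|\xi_{x_0} + \xi|,\; |\xi_{y_0} + \xi| \,\geq\, \tfrac{1}{2}|\xi_{x_0}| \,\geq\, \tfrac{C_1}{4}\beta|x_0-y_0|^{\beta-1},
\]
matching exactly the lower bounds \eqref{controle de nu_1}--\eqref{controle de nu_2} used in Lemma \ref{lem_compac1}. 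From here, Steps 5 and 6 of that proof apply verbatim, and enlarging $C_1$ once more produces the contradiction, so $\mathbf{M} \leq 0$. The desired Hölder estimate in $B_{3/4}$ then follows by specializing $x = z$ and using $|y - z| \lesssim 1$.

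The main obstacle will be the coordination of the two thresholds $C_1$ and $\Gamma$: the constant $\Gamma$ in Lemma \ref{lem_compac1} is itself defined in terms of the $C_1$ chosen there, and in the present argument we require a $C_1$ large enough that $(C_1\beta/2)(2r)^{\beta-1} \geq 2\Gamma$. Since $\Gamma$ can be fixed once and for all from Lemma \ref{lem_compac1}, and the constraints on the present $C_1$ are a finite collection of lower-bound inequalities involving only universal constants, a single sufficiently large value of $C_1$ satisfies all requirements simultaneously, and no circular dependence arises.
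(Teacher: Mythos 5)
Your approach is correct in spirit but takes a genuinely different, and considerably longer, route than the paper. The paper's own proof of Lemma~\ref{lem_compac2} is essentially three lines: when $|\xi| \leq \Gamma$, the triangle inequality gives $|Du + \xi| \geq |Du| - |\xi| \geq \Gamma \geq 1$ whenever $|Du| \geq 2\Gamma$, so $\sigma(|Du+\xi|) \geq \sigma(1) \geq 1$ and the equation is uniformly elliptic in that regime; consequently $u$ satisfies the Pucci extremal inequalities
\[
\mathcal{M}^+(D^2u) + |f| \geq 0, \qquad \mathcal{M}^-(D^2u) - |f| \leq 0
\]
in the viscosity sense wherever the gradient is large, and the Hölder estimate then follows directly from the Imbert--Silvestre regularity theory for equations that are uniformly elliptic only where the gradient is large (the reference \cite{imb_silv_jems}). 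In other words, the paper delegates the entire doubling argument to an off-the-shelf theorem, whereas you re-run the Ishii--Lions machinery of Lemma~\ref{lem_compac1} from scratch. Your adaptation (absorbing $|\xi| \leq \Gamma$ into the lower bound $|\xi_{x_0}| \geq \tfrac{C_1}{2}\beta|x_0-y_0|^{\beta-1}$ by enlarging $C_1$, noting that $|x_0-y_0|^{\beta-1} \geq (r/8)^{\beta-1} > 1$ so the lower bound is bounded away from zero uniformly) is sound, and your handling of the $\Gamma$--$C_1$ coordination is the right way to avoid circularity. What your route costs is length and one more pass through the algebra of Steps~4--6, while the paper's route buys brevity and modularity at the price of invoking an external Hölder estimate. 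Both are valid; the paper's is cleaner, yours is more self-contained.
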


\begin{proof}
Notice that if $|Du| \geq 2\Gamma$, then
\[
|Du + \xi| \geq ||Du| - |\xi|| \geq \Gamma \geq 1,
\]
which implies $\sigma(|Du + \xi|) \geq 1$. Hence the operator in \eqref{eq_scal} is uniformly elliptic, and $u$ also satisfies
\begin{equation*}
\begin{cases}
{\mathcal M}^+(D^2u) + |f| \geq 0 \quad \mbox{ in } \quad B_1 \\
{\mathcal M}^-(D^2u) - |f| \leq 0 \quad \mbox{ in } \quad B_1,
\end{cases}    
\end{equation*}
in the viscosity sense. Therefore, from \cite{imb_silv_jems}, we can infer the existence of a universal constant $\beta \in (0, 1)$, such that $u\in C^\beta(B_{3/4})$ with the estimate
\[
\|u\|_{C^\beta(B_{3/4})} \leq C,
\]
where $C$ is a positive universal constant.
\end{proof}
\begin{proposition}[Hölder continuity]\label{local_holder_continuity}
Let $u\in C(B_1)$ be a normalized viscosity solution to $\eqref{eq_scal}$, with $\|f\|_{L^\infty(B_1)} \leq 1$ and $\xi\in\R^d$ arbitrary. Then, $u$ is locally Hölder continuous in $B_1$. Moreover, there exist universal constants $C>0$ and $\beta \in (0,1)$ such that 
\[
\sup_{\substack{x,y \in B_{3/4} \\ x \neq y}} \frac{|u(x) - u(y)|}{|x - y|^{\beta}}\leq C.
\]
\end{proposition}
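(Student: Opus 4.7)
The plan is to observe that the proposition is an immediate consequence of a dichotomy on the size of $|\xi|$, using Lemma \ref{lem_compac1} and Lemma \ref{lem_compac2} as the two building blocks.

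First, I fix the threshold constant $\Gamma > 0$ produced by Lemma \ref{lem_compac1}, which depends only on universal parameters. Given a normalized viscosity solution $u$ to \eqref{eq_scal} with $\|f\|_{L^\infty(B_1)} \leq 1$ and an arbitrary vector $\xi \in \R^d$, I split into two disjoint cases. If $|\xi| > \Gamma$, Lemma \ref{lem_compac1} directly delivers a universal exponent $\beta_1 \in (0,1)$ and a constant $C_1 > 0$ with the desired Hölder bound. If instead $|\xi| \leq \Gamma$, Lemma \ref{lem_compac2} supplies universal $\beta_2 \in (0,1)$ and $C_2 > 0$ yielding the same type of estimate on $B_{3/4}$.

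Then I unify the two cases by setting $\beta := \min\{\beta_1,\beta_2\}$ and $C := \max\{C_1, C_2\}$, absorbing any loss from passing from the exponent $\beta_i$ to $\beta$ via the bound $|x-y| \leq 2$ (since we are comparing points in $B_{3/4}$). Since the choice of which lemma we invoke is dictated only by $|\xi|$ versus the universal number $\Gamma$, the resulting constants $\beta$ and $C$ depend only on $d$, $p$, and $\|f\|_{L^\infty(B_1)}$.

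The only subtlety is that Lemma \ref{lem_compac1} is phrased on $B_{1/2}$ whereas Lemma \ref{lem_compac2} is phrased on $B_{3/4}$. This discrepancy is not a real obstacle: the doubling-variable argument in the proof of Lemma \ref{lem_compac1} depends on a free radius $r$ (with the center $z$ ranging over $B_{r/2}$), so it applies verbatim with $r = 3/4$ up to a harmless adjustment of $C_1$ and $C_2$ in the penalization; equivalently, one can cover $B_{3/4}$ by finitely many balls on which the $C^{\beta_1}$ estimate of Lemma \ref{lem_compac1} holds and conclude via a standard interpolation using the uniform bound $\|u\|_{L^\infty(B_1)} \leq 1$. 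Either way, the Hölder seminorm on $B_{3/4}$ is controlled uniformly, which completes the proof.
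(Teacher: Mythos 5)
Your proof is correct and takes essentially the same approach as the paper: the paper's own proof is a one-line invocation of Lemmas \ref{lem_compac1} and \ref{lem_compac2} via the dichotomy $|\xi| > \Gamma$ versus $|\xi| \leq \Gamma$, and you spell out exactly that, including the standard reconciliation of exponents $\beta := \min\{\beta_1,\beta_2\}$ and constants $C := \max\{C_1,C_2\}$. Your remark about the discrepancy between the domains $B_{1/2}$ and $B_{3/4}$ in the two lemmas is a genuine and useful observation that the paper glosses over; the covering/interpolation fix you propose is the correct way to close it.
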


\begin{proof}
The proof follows from a direct combination of the Lemmas \ref{lem_compac1} and \ref{lem_compac2}.
\end{proof}

In what follows, we present a property known as the cancellation law, which states that a solution to the degenerate homogeneous problems is also a solution to the elliptic homogeneous equation.

\begin{proposition}[Cancellation law]\label{prop_calaw}
Let $u \in C(B_1)$ be a viscosity solution to
\[
-\sigma(|Du+\xi|)\left[\Delta u +(p-2)\left\langle D^2u\dfrac{Du + \xi}{|Du + \xi|}, \dfrac{Du + \xi}{|Du + \xi|} \right\rangle\right] = 0 \quad \mbox{ in } \quad B_1,
\]
where $\xi \in \R^d$. Then $u$ is a viscosity solution to
\begin{equation}\label{eq_ellhom}
-\Delta u -(p-2)\left\langle D^2u\dfrac{Du + \xi}{|Du + \xi|}, \dfrac{Du + \xi}{|Du + \xi|} \right\rangle = 0 \quad \mbox{ in } \quad B_1.     
\end{equation}
\end{proposition}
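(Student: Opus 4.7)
The plan is to verify the viscosity sub- and supersolution inequalities for \eqref{eq_ellhom} separately; I will focus on the subsolution case, as the supersolution case is symmetric. Let $\varphi\in C^{2}(B_{1})$ touch $u$ from above at $x_{0}\in B_{1}$, write $\hat n:=(D\varphi+\xi)/|D\varphi+\xi|$ whenever the denominator is nonzero, and split according to whether $D\varphi(x_{0})+\xi$ vanishes.

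In the easy case $D\varphi(x_{0})+\xi\neq 0$, I would use that $\sigma$ is strictly positive on $(0,+\infty)$, which is forced by A\ref{Adini} together with the fact that $\sigma^{-1}$ must be a well-defined function on $\sigma([0,+\infty))$. Dividing the hypothesized viscosity inequality $-\sigma(|D\varphi(x_{0})+\xi|)\bigl[\Delta\varphi+(p-2)\langle D^{2}\varphi\,\hat n,\hat n\rangle\bigr](x_{0})\leq 0$ by the positive factor $\sigma(|D\varphi(x_{0})+\xi|)$ immediately yields the subsolution inequality for \eqref{eq_ellhom} in this regime.

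The delicate case is $D\varphi(x_{0})+\xi=0$: here the assumed inequality degenerates to $0\leq 0$, yet Definition~2.1 demands the condition involving $\lambda_{max}(D^{2}\varphi(x_{0}))$ when $p\geq 2$ or $\lambda_{min}(D^{2}\varphi(x_{0}))$ when $1<p<2$. My plan is a standard perturbation. First, replace $\varphi$ by $\varphi+|x-x_{0}|^{4}$ so that $u-\varphi$ has a strict local maximum at $x_{0}$ without altering $D\varphi(x_{0})$ or $D^{2}\varphi(x_{0})$. Then, for each unit vector $w\in\R^{d}$ and $\delta>0$, set $\varphi_{\delta}(x):=\varphi(x)+\delta\langle w,x-x_{0}\rangle$ and let $x_{\delta}$ be a local maximum of $u-\varphi_{\delta}$; strict maximality forces $x_{\delta}\to x_{0}$ as $\delta\to 0$. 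Along any subsequence where $D\varphi_{\delta}(x_{\delta})+\xi\neq 0$, the easy case applied at $x_{\delta}$ yields
\[
-\Delta\varphi_{\delta}(x_{\delta})-(p-2)\langle D^{2}\varphi_{\delta}(x_{\delta})\,\hat n_{\delta},\hat n_{\delta}\rangle\leq 0,
\]
with $\hat n_{\delta}$ the normalized shifted gradient at $x_{\delta}$. Compactness of the unit sphere extracts a further subsequence with $\hat n_{\delta}\to\hat n_{\infty}$, and the limit reads
\[
-\Delta\varphi(x_{0})-(p-2)\langle D^{2}\varphi(x_{0})\,\hat n_{\infty},\hat n_{\infty}\rangle\leq 0.
\]
Since $(p-2)\langle D^{2}\varphi(x_{0})\,\hat n_{\infty},\hat n_{\infty}\rangle\leq (p-2)\lambda_{max}(D^{2}\varphi(x_{0}))$ when $p\geq 2$ (the sign flip when $1<p<2$ exchanges $\lambda_{max}$ for $\lambda_{min}$ on the right), the required Definition~2.1 inequality follows.

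The hard part will be guaranteeing the existence of a sequence $\delta_{n}\downarrow 0$ with $D\varphi_{\delta_{n}}(x_{\delta_{n}})+\xi\neq 0$. If this fails for every choice of $w$, a Taylor expansion at $x_{0}$ forces $x_{\delta}-x_{0}\approx -\delta\,D^{2}\varphi(x_{0})^{-1}w$ whenever $D^{2}\varphi(x_{0})$ is invertible, an algebraic constraint that cannot persist as $w$ ranges over $\mathbb{S}^{d-1}$, so perturbing $w$ off this exceptional set is enough. Should the degeneracy survive for every $w$, I would add a higher-order correction $\eta|x-x_{0}|^{3}$ to the perturbation and send $\delta\to 0$ first with $\eta>0$ fixed, then $\eta\to 0$, to break the coincidence. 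The supersolution part of the proposition then follows by the symmetric argument, interchanging the roles of $\lambda_{max}$ and $\lambda_{min}$.
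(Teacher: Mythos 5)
Your argument for the case $D\varphi(x_0)+\xi\neq 0$ is the same as the paper's (the paper first shifts by $v:=u+\xi\cdot x$ to reduce to $\xi=0$, a cosmetic difference), and you are right that A\ref{Adini} forces $\sigma(t)>0$ for $t>0$, so dividing out $\sigma$ is legitimate. The substantive divergence is in the degenerate case $D\varphi(x_0)+\xi=0$: the paper does \emph{not} attempt to verify the $\lambda_{\max}/\lambda_{\min}$ condition of Definition~2.1 by hand; it instead invokes the known equivalence result from \cite{jlm,KMP12}, according to which, for the homogeneous normalized $p$-Laplace equation, it suffices to test with functions whose (shifted) gradient does not vanish at the touching point. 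Your proposal tries to reprove that equivalence by a linear perturbation, and this is where the gap lies.

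The perturbation $\varphi_\delta:=\varphi+|x-x_0|^4+\delta\langle w,x-x_0\rangle$ does not, in general, produce a touching point $x_\delta$ with $D\varphi_\delta(x_\delta)+\xi\neq 0$, and the two fallbacks you offer do not repair this. Concretely, take $u(x)=-\xi\cdot x$ (an exact solution) and $\varphi(x)=-\xi\cdot x+|x-x_0|^2$, which touches $u$ from above at $x_0$ with $D\varphi(x_0)+\xi=0$. Here $u$ is smooth, so at any local maximum $x_\delta$ of $u-\varphi_\delta$ the first-order condition gives $D\varphi_\delta(x_\delta)=Du(x_\delta)=-\xi$, i.e.\ $D\varphi_\delta(x_\delta)+\xi=0$ for \emph{every} $\delta$ and \emph{every} direction $w$. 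Varying $w$ over $\mathbb{S}^{d-1}$ does not help (the constraint is not on $w$ but forced by $Du\equiv -\xi$), and adding $\eta|x-x_0|^3$ leaves the first-order condition, and hence the vanishing of the shifted gradient, untouched. So in this example your scheme extracts no nondegenerate test point, yet the $\lambda_{\max}$-inequality you need to verify is genuinely a statement that must be proved, not a triviality of the perturbation. The equivalence of the two notions of viscosity solution for $\Delta^N_p$ at critical points is a nontrivial lemma with its own proof in \cite{jlm,KMP12}; the clean route, and the one the paper takes, is to cite it rather than rederive it by a linear perturbation.
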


\begin{proof}
We set $v(x) := u + \xi\cdot x$, so that $v$ is a viscosity solution to
\[
-\sigma(|Dv|)\left[\Delta v + (p-2)\left\langle D^2v\dfrac{Dv}{|Dv|}, \dfrac{Dv}{|Dv|} \right\rangle\right] = 0 \quad \mbox{ in} \quad B_1.
\]
Let $x_0 \in B_1$ and $\varphi \in C^2(B_1)$ be a function that touches $v$ from below at $x_0$ and $D\varphi(x_0) \not= 0$. Since $v$ is a viscosity solution to the equation above, we have
\[
-\sigma(|D\varphi(x_0)|)\left[\Delta \varphi(x_0) + (p-2)\left\langle D^2\varphi(x_0)\dfrac{D\varphi(x_0)}{|D\varphi(x_0)|}, \dfrac{D\varphi(x_0)}{|D\varphi(x_0)|} \right\rangle\right] \leq 0 \quad \mbox{ in} \quad B_1,
\]
which implies
\[
-\Delta \varphi(x_0) - (p-2)\left\langle D^2\varphi(x_0)\dfrac{D\varphi(x_0)}{|D\varphi(x_0)|}, \dfrac{D\varphi(x_0)}{|D\varphi(x_0)|} \right\rangle \leq 0 \quad \mbox{ in} \quad B_1.
\]
Recall that from \cite{jlm, KMP12}, in the homogeneous case, it is enough to use test functions with $D\varphi(x_0) \not=0$, hence $v$ is a viscosity supersolution to
\[
-\Delta v - (p-2)\left\langle D^2v\dfrac{Dv}{|Dv|}, \dfrac{Dv}{|Dv|} \right\rangle = 0 \quad \mbox{ in} \quad B_1,
\]
Rescaling back to $u$ we obtain
\[
-\Delta u -(p-2)\left\langle D^2u\dfrac{Du + \xi}{|Du + \xi|}, \dfrac{Du + \xi}{|Du + \xi|} \right\rangle \leq 0 \quad \mbox{ in } \quad B_1, 
\]
in the viscosity sense. The case for supersolutions is similar, therefore, $u$ is a viscosity solution to \eqref{eq_ellhom}.
\end{proof}

\begin{proposition}[Stability]\label{prop_stab}
Let $(u_n)_{n \in \N}$ be a sequence of viscosity solutions to 
\begin{equation}\label{eq_solseq2}
-\sigma_n(|Du_n+\xi_n|)\left[\Delta u_n +(p-2)\left\langle D^2u_n \dfrac{Du_n + \xi_n}{|Du_n + \xi_n|}, \dfrac{Du_n + \xi_n}{|Du_n + \xi_n|} \right\rangle\right] = f_n(x) \quad \mbox{ in } \quad B_1.     
\end{equation}
Assume that $A\ref{Asigma}$ - $A\ref{Af}$ hold true and that $(\sigma_n)_{n\in\N} \subset {\mathcal N}$. Suppose further that there exists $u_\infty \in C(B_1)$ such that $u_n \to u_\infty$ locally uniformly in $B_1$ and that $\|f_n\|_{L^\infty(B_1)} \to 0$. We have
\begin{itemize}
\item[1)] If $(\xi_n)_{n\in\N}$ is bounded, then $u_\infty$ solves
\begin{equation}\label{eq_scallim1}
-\Delta u_\infty -(p-2)\left\langle D^2u_\infty \dfrac{Du_\infty + \xi_\infty}{|Du_\infty + \xi_\infty|}, \dfrac{Du_\infty + \xi_\infty}{|Du_\infty + \xi_\infty|} \right\rangle = 0 \quad \mbox{ in } \quad B_{4/5}, 
\end{equation}
in the viscosity sense, for some $\xi_\infty \in \R^d$.
\item[2)] If $(\xi_n)_{n\in\N}$ is unbounded, then $u_\infty$ solves 
\begin{equation}\label{eq_scallim2}
-\Delta u_\infty -(p-2)\left\langle D^2 u_\infty e_\infty, e_\infty \right\rangle = 0 \quad \mbox{ in } \quad B_{4/5},
\end{equation}
in the viscosity sense, where $|e_\infty| = 1$.
\end{itemize}
\end{proposition}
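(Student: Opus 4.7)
The argument is a standard viscosity stability passage to the limit, with the degeneracy $\sigma_n(\cdot)$ removed by dividing through and then controlled by two properties of the non-collapsing family $\mathcal{N}$: each $\sigma_n$ inherits the normalization $\sigma_n(1)\geq 1$ from A\ref{Asigma}, and by the construction in Subsection \ref{crafting_sequences} there is a sequence $c_n\to 0$ with $\sigma_n(c_n)\geq 1$; since each $\sigma_n$ is non-decreasing, this yields $\sigma_n(t)\geq 1$ for every $t\geq c_n$. After passing to subsequences we may assume $\xi_n\to\xi_\infty\in\R^d$ in case (1) and $\xi_n/|\xi_n|\to e_\infty\in\mS^{d-1}$ in case (2).

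For case (1), fix $x_0\in B_{4/5}$ and $\varphi\in C^2(B_{4/5})$ touching $u_\infty$ strictly from below at $x_0$ (subsolutions are symmetric). By the uniform convergence $u_n\to u_\infty$, there exist $x_n\to x_0$ at which $u_n-\varphi$ attains a local minimum. If $D\varphi(x_0)+\xi_\infty\neq 0$, then $|D\varphi(x_n)+\xi_n|\geq\delta>0$ for large $n$, whence $\sigma_n(|D\varphi(x_n)+\xi_n|)\geq 1$. Dividing the viscosity supersolution inequality for $u_n$ by $\sigma_n(|D\varphi(x_n)+\xi_n|)$ and using $\|f_n\|_{L^\infty(B_1)}\to 0$, we pass to the limit to obtain
\[
-\Delta\varphi(x_0)-(p-2)\left\langle D^2\varphi(x_0)\frac{D\varphi(x_0)+\xi_\infty}{|D\varphi(x_0)+\xi_\infty|},\frac{D\varphi(x_0)+\xi_\infty}{|D\varphi(x_0)+\xi_\infty|}\right\rangle\geq 0.
\]
When $D\varphi(x_0)+\xi_\infty=0$, the equation \eqref{eq_scallim1} is singular; we linearize via the translation $v_n:=u_n+\xi_n\cdot x$, under which \eqref{eq_solseq2} becomes $-\sigma_n(|Dv_n|)\NDelta v_n=f_n$ and $v_n\to v_\infty:=u_\infty+\xi_\infty\cdot x$. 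Repeating the division-by-$\sigma_n$ step and appealing to the known stability of the normalized $p$-Laplacian (in the same spirit as in Proposition \ref{prop_calaw}, where at critical points of the test function the standard $\lambda_{\max}/\lambda_{\min}$ conventions are used) we conclude $-\NDelta v_\infty=0$, and shifting back yields \eqref{eq_scallim1}.

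For case (2), the same touching-point construction produces $x_n\to x_0$; now $|D\varphi(x_n)+\xi_n|\to\infty$, so in particular $|D\varphi(x_n)+\xi_n|\geq 1$ for large $n$ and hence $\sigma_n(|D\varphi(x_n)+\xi_n|)\geq\sigma_n(1)\geq 1$, while simultaneously $\frac{D\varphi(x_n)+\xi_n}{|D\varphi(x_n)+\xi_n|}\to e_\infty$. Dividing by $\sigma_n$ and letting $n\to\infty$ produces
\[
-\Delta\varphi(x_0)-(p-2)\langle D^2\varphi(x_0)e_\infty,e_\infty\rangle\geq 0,
\]
with no singular subcase to worry about since the direction vector in \eqref{eq_scallim2} is the constant $e_\infty$. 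The main obstacle is the singular subcase of (1), where the test function's gradient exactly matches $-\xi_\infty$ and the limit equation loses its explicit gradient factor; this is precisely what the linear translation $v_n=u_n+\xi_n\cdot x$ is designed to bypass, reducing the question to the already-understood stability of $-\NDelta$, whose viscosity formulation handles critical points of test functions through the standard spectral conventions.
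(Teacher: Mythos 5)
Your proof is essentially the paper's: construct touching points (the paper uses quadratic polynomials, you use general $C^2$ test functions, which is equivalent), divide through by $\sigma_n$ once you know it stays bounded away from zero, and pass to the limit. Two small divergences are worth noting. First, to lower-bound $\sigma_n$ you invoke the explicit sequence $c_n\to 0$ with $\sigma_n(c_n)\geq 1$; this implicitly identifies $\sigma_n$ with the $n$-th element of the canonical enumeration of $\mathcal{N}$, whereas the proposition's hypothesis only says $(\sigma_n)_{n\in\N}\subset\mathcal{N}$, which allows repetitions or reorderings, so strictly speaking the pointwise bound $\sigma_n(\cdot)\geq 1$ need not hold after the relabelling. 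The paper sidesteps this by appealing to the abstract non-collapsing property (Definition \ref{def_noncolapsing} via Proposition \ref{prop_sn}), which says precisely that $\sigma_n(|b+\xi_n|)\not\to 0$ whenever $|b+\xi_n|$ is bounded away from zero, with no need to track indices; it yields a positive lower bound rather than the cleaner $\geq 1$, but that is all the limit passage needs. Second, your treatment of the singular subcase $D\varphi(x_0)+\xi_\infty=0$ via the translation $v_n=u_n+\xi_n\cdot x$ is not wrong, but it is a detour: after translating you still must invoke the fact that the homogeneous $\NDelta$ equation can be tested with only non-critical $C^2$ test functions (the equivalence from \cite{jlm,KMP12}); dividing by $\sigma_n$ cannot be ``repeated'' at a critical test point since the non-collapsing property gives nothing when the gradient is not bounded away from zero. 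The paper invokes that equivalence directly for the shifted limit equation, so it never has to pass to the limit through critical test functions at all. The content is the same, but stating explicitly that for the limit equation one may restrict to test functions with $b+\xi_\infty\neq 0$ makes the avoidance of the singular case transparent rather than relegated to a side remark about ``spectral conventions.''
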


\begin{proof}
Consider $\varphi$ a quadratic polynomial touching $u_\infty$ from below at $x_0 \in B_{4/5}$. Without loss of generality, we can assume that $\varphi$ has the form
\[
\varphi(x) = u_\infty(x_0) + b\cdot(x-x_0) + \dfrac{1}{2}\left\langle M(x-x_0), (x-x_0) \right\rangle,
\]
For simplicity, we can also assume that $|x_0| = u_\infty(x_0) = 0$. Since $u_n$ converges to $u_\infty$, we can find a sequence of points $(x_n)_{n \in \mathbb{N}}$ and quadratic polynomials $(\varphi_n)_{n \in \mathbb{N}}$ of the form
\[
\varphi_n(x) = u_\infty(x_n) + b\cdot(x-x_n) + \dfrac{1}{2}\left\langle M(x-x_n), (x-x_n) \right\rangle,
\]
such that $x_n \to x_0$, $\varphi_n \to \varphi$ locally uniformly, and $\varphi_n$ touches $u_n$ from below at $x_n$.

\medskip

\noindent{\bf Case 1)} Suppose that the sequence $(\xi_n)_{n\in\mathbb{N}}$ is bounded. Then, up to a subsequence, we have that $\xi_n \to \xi_\infty$ in $B_{4/5}$. Once, in the homogeneous case, it is sufficient to use test functions with non-zero gradient, we can assume that
\[
|b+\xi_{\infty}|>0.
\]
Since $(\sigma_{n})_{n \in \N} \subset {\mathcal N}$, we have that $\sigma_n(b+\xi_n) \not\to 0$ (recall Definition \ref{def_noncolapsing}). Therefore,  since $u_n$ is a viscosity solution to \eqref{eq_solseq2}, we have
\[
-\left[\Delta \varphi_{n}(x_n)+(p-2)\left\langle M\frac{b+\xi_{n}}{|b+\xi_{n}|},\frac{b+\xi_{n}}{|b+\xi_{n}|}\right\rangle\right]\leq\frac{f_n(x_n)}{\sigma_n(|b+\xi_n|)} \leq \frac{\|f_n\|_{L^\infty(B_1)}}{\sigma_n(|b+\xi_n|)}\quad \mbox{ in } \quad B_1, 
\]
and by passing the limit as $n \to \infty$, we obtain
\[
-\Delta \varphi(x_0) -(p-2)\left\langle M \dfrac{b + \xi_\infty}{|b + \xi_\infty|}, \dfrac{b + \xi_\infty}{|b + \xi_\infty|} \right\rangle \leq 0 \quad \mbox{ in } \quad B_1,  
\]
which implies that $u_\infty$ is a viscosity subsolution to \eqref{eq_scallim1}. A similar argument shows that $u_\infty$ is a viscosity supersolution, and hence a viscosity solution to \eqref{eq_scallim1}.

\medskip

\noindent{\bf Case 2)} Now, suppose that $(\xi_n)_{n\in\mathbb{N}}$ is unbounded. In this case, we consider the normalized sequence $e_n := \xi_n/|\xi_n|$, which, up to a subsequence, converges to $e_\infty$ in $B_{4/5}$. Notice that since $|\xi_n| \to \infty$, we have, for $n$ sufficiently large, that $b|\xi_n|^{-1} \not= -e_n$ and $|b + \xi_n| > 1$, which implies $\sigma_n(|b+\xi_n|) > 1$. Hence, since $u_n$ is a viscosity solution to \eqref{eq_solseq2}, we have
\[
-\Delta \varphi_n(x_n) -(p-2)\left\langle M \dfrac{b|\xi_n|^{-1} + e_n}{|b|\xi_n|^{-1} + e_n|}, \dfrac{b|\xi_n|^{-1} + e_n}{|b|\xi_n|^{-1} + e_n|} \right\rangle \leq \dfrac{f_n(x_n)}{\sigma_n(|b+\xi_n|)} \quad \mbox{ in } \quad B_1.
\]
Therefore, by passing the limit as $n \to \infty$, we obtain that $u_\infty$ is a viscosity subsolution to \eqref{eq_scallim2}. We can prove analogously that $u_\infty$ is also a viscosity supersolution. This finishes the proof.
\end{proof}

\section{Differentiability of solutions}

This section presents the proof of Theorem \ref{theo_1}, namely, we show that solutions to \eqref{1} are differentiable, with appropriate estimates. Recall that ${\mathcal N}$ is a family of non-collapsing sets constructed in Section \ref{crafting_sequences}. 

\begin{lemma}[Approximation Lemma]\label{h_function}
Let $\sigma\in\mathcal{N}$ and $u\in C(B_1)$ be a normalized viscosity solution to \eqref{eq_scal}. Assume that $A\ref{Asigma}$ and $A\ref{Af}$ are in force. Given $\delta>0$, there exists $\varepsilon>0$ such that if 
\[
\|f\|_{L^{\infty}(B_1)}\leq\varepsilon,
\]
then we can find a function $h\in C^{1,\beta}(B_{1/2})$ satisfying
\[
\|u-h\|_{L^{\infty}(B_{1/2})}\leq\delta \quad \text{and} \quad \|h\|_{C^{1,\beta}(B_{1/2})}\leq C,
\]
where the constants $C$ and $\beta$ are independent of $\mathcal{N},\delta$ and $\varepsilon$.
\end{lemma}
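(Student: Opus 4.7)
The plan is to argue by contradiction using the compactness estimate of Proposition \ref{local_holder_continuity} and the stability dichotomy of Proposition \ref{prop_stab}, and then exploit the known $C^{1,\beta}$-regularity for the two possible limiting homogeneous equations.

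Assume, toward a contradiction, that there exist $\delta_0>0$ and sequences $(\sigma_n)_{n\in\mathbb{N}}\subset\mathcal{N}$, $(\xi_n)_{n\in\mathbb{N}}\subset\mathbb{R}^d$, and $(f_n)_{n\in\mathbb{N}}$ with $\|f_n\|_{L^\infty(B_1)}\to 0$, together with normalized viscosity solutions $u_n\in C(B_1)$ of the corresponding instance of \eqref{eq_scal}, such that no $h\in C^{1,\beta}(B_{1/2})$ with $\|h\|_{C^{1,\beta}(B_{1/2})}\leq C_0$ satisfies $\|u_n-h\|_{L^\infty(B_{1/2})}\leq\delta_0$, where $C_0$ is the universal constant produced by the regularity theory for the limiting problem (fixed below).

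Proposition \ref{local_holder_continuity} supplies a universal $\beta\in(0,1)$ and a uniform bound on $\|u_n\|_{C^\beta(B_{3/4})}$; an application of Arzel\`a--Ascoli then yields a subsequence, still denoted $(u_n)$, converging locally uniformly in $B_{4/5}$ to some $u_\infty\in C(B_{4/5})$ with $\|u_\infty\|_{L^\infty(B_{4/5})}\leq 1$. Proposition \ref{prop_stab} now identifies the limit equation, according to the behaviour of $(\xi_n)$: if $(\xi_n)$ is bounded, then up to a subsequence $\xi_n\to\xi_\infty\in\mathbb{R}^d$ and $u_\infty$ is a viscosity solution of \eqref{eq_scallim1} in $B_{4/5}$; otherwise $\xi_n/|\xi_n|\to e_\infty\in\mathbb{S}^{d-1}$ and $u_\infty$ solves the linear constant-coefficient equation \eqref{eq_scallim2}. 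In the first case, the substitution $v(x):=u_\infty(x)+\xi_\infty\cdot x$ converts \eqref{eq_scallim1} into $-\Delta_p^N v=0$ in $B_{4/5}$, and the $C^{1,\beta}$-regularity established in \cite{APR17, Attouchi-Ruosteenoja2018} gives $v\in C^{1,\beta}_{\mathrm{loc}}(B_{4/5})$; since affine functions do not affect $C^{1,\beta}$-regularity, the same holds for $u_\infty$. In the second case, \eqref{eq_scallim2} is linear, uniformly elliptic, with constant coefficients, so classical theory delivers $u_\infty\in C^\infty(B_{4/5})$ with universal interior estimates. In either case there exists a universal constant $C_0>0$, depending only on $d$ and $p$, such that $\|u_\infty\|_{C^{1,\beta}(B_{1/2})}\leq C_0$.

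Setting $h:=u_\infty$ and invoking the uniform convergence $u_n\to u_\infty$ on $B_{1/2}$ produces $\|u_n-h\|_{L^\infty(B_{1/2})}<\delta_0$ for all sufficiently large $n$, contradicting the assumed failure. The main obstacle is ensuring that the $C^{1,\beta}$-constant $C_0$ is genuinely independent of $\mathcal{N}$ and of the particular $\xi_\infty$ appearing in Case~1; this is precisely where the non-collapsing structure of $\mathcal{N}$, encoded in Proposition \ref{prop_stab}, is indispensable: it prevents $\sigma_n(|b+\xi_n|)$ from collapsing along admissible test-function gradients, so that the limiting equation is truly the uniformly elliptic homogeneous normalized $p$-Laplace problem (possibly after a shift or a degeneration to a linearised form) whose universal regularity theory can be invoked.
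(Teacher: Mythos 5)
Your proposal is correct and follows essentially the same route as the paper's proof: contradiction, compactness via Proposition \ref{local_holder_continuity}, identification of the limit equation via Proposition \ref{prop_stab} (split on boundedness of $(\xi_n)$), and known $C^{1,\beta}$-regularity of the two limiting homogeneous problems — \cite{APR17} for the shifted normalized $p$-Laplacian and Caffarelli--Cabré for the constant-coefficient linear case. The only cosmetic differences are that you make the change of variables $v=u_\infty+\xi_\infty\cdot x$ explicit (the paper cites \cite[Lemma 3.2]{APR17} directly for the shifted form) and you phrase the contradiction hypothesis slightly more carefully by also negating the bound $\|h\|_{C^{1,\beta}}\le C_0$.
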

\begin{proof}
We argue by contradiction. Suppose there exists $\delta_0>0$ and sequences $(\sigma_n)_{n\in \mathbb{N}},(\xi_n)_{n\in \mathbb{N}},(u_n)_{n\in \mathbb{N}}$ and $(f_n)_{n\in \mathbb{N}}$, satisfying 
\[
\sigma_n(0)=0, \quad \sigma_n(1)\geq 1 \quad \text{and} \quad \text{if}\quad \sigma_n(a_n)\to 0 \quad \text{then} \quad a_n\to 0,
\]
\[
\|f\|_{L^{\infty}(B_1)}\leq\frac{1}{n},
\]
and
\begin{equation}\label{eq_seq}
-\sigma_n(|Du_n+\xi_n|)\NDelta u_n=f_n \quad \text{in} \quad B_1,
\end{equation}
but
\begin{equation}\label{contra_1}
\sup_{B_{1/2}}|u_n-h|>\delta_0,
\end{equation}
for every $h\in C^{1,\beta}(B_{1/2})$. From Proposition \ref{local_holder_continuity}, we know that there exists a function $u_{\infty}\in C^{\beta}_{\text{loc}}(B_1)$ such that, up to a subsequence, $u_n\to u_{\infty}$ uniformly locally.

If the sequence $(\xi_n)$ is bounded, then by the lemma \ref{prop_stab}, the function $u_{\infty}$ solves 
\[
-\Delta u_\infty -(p-2)\left\langle D^2u_\infty \dfrac{Du_\infty + \xi_\infty}{|Du_\infty + \xi_\infty|}, \dfrac{Du_\infty + \xi_\infty}{|Du_\infty + \xi_\infty|} \right\rangle = 0 \quad \mbox{ in } \quad B_{4/5}, 
\]
for some $\xi_\infty \in \R^d$. As a consequence, \cite[Lemma 3.2]{APR17}  give to us a existence of $\beta_1>0$ such that $u_{\infty}\in C^{1,\beta_1}(B_{1/2})$. By choosing $h\equiv u_{\infty}$, for $n$ large enough we get a contradiction with \eqref{contra_1}. On the other hand, if the $(\xi_n)$ is unbounded, by using again Lemma \ref{prop_stab} we have that $u_{\infty}$ is a viscosity solution to
\[
-\Delta u_\infty -(p-2)\left\langle D^2 u_\infty e_\infty, e_\infty \right\rangle = 0 \quad \mbox{ in } \quad B_{4/5},
\]
where $|e_\infty| = 1$. Observe that this operator is linear and uniformly elliptical with constant coefficients. Therefore, from \cite[Corollary 5.7]{Caffarelli-Cabre-1995} we have that $u_{\infty}\in C^{1,\beta_0}(B_{1/2})$ for a universal $\beta_0\in (0,1)$. Once more, by choosing $h\equiv u_{\infty}$, we obtain a contradiction with \eqref{contra_1} for $n$ sufficiently large.  
\end{proof}

Next, we use the function $h \in C^{1, \beta}(B_{1/2})$ to find an affine function that is close to $u$ in the $L^\infty$-norm.   
 
\begin{proposition}\label{approximation_lemma}
Let $\sigma\in\mathcal{N}$ and $u\in C(B_1)$ be a normalized viscosity solution to \eqref{eq_scal}. Assume that $A\ref{Asigma}$-$A\ref{Af}$ hold true. There exists $\varepsilon>0$ such that if 
\[
\|f\|_{L^{\infty}(B_1)}\leq\varepsilon,
\]
then we can find an affine function of the form ${L}(x):= A + B\cdot x$, with $|A|, |B| \leq C$, satisfying
\[
\sup_{B_\rho}|u(x)-{L}(x)|\leq\mu_1\cdot\rho,
\]
where $\mu_1$ was defined in Section \ref{crafting_sequences}.
\end{proposition}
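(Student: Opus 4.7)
The plan is a classical Caffarelli-style approximation argument: transfer the Taylor expansion of the smoother function $h$ produced by Lemma \ref{h_function} back to $u$ via the triangle inequality, and calibrate the smallness parameter $\delta$ so that the resulting estimate matches the value of $\mu_1$ built in Section \ref{crafting_sequences}.

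First I would fix a small parameter $\delta>0$, to be chosen at the very end, and apply Lemma \ref{h_function} to obtain the corresponding $\varepsilon>0$ and a function $h\in C^{1,\beta}(B_{1/2})$ such that
\[
\|u-h\|_{L^{\infty}(B_{1/2})}\leq\delta,\qquad \|h\|_{C^{1,\beta}(B_{1/2})}\leq C,
\]
for universal constants $C,\beta\in(0,1)$ (the same $\beta$ used in Section \ref{crafting_sequences}). I then define the affine function
\[
L(x):=h(0)+Dh(0)\cdot x,
\]
so that, by the $C^{1,\beta}$ bound on $h$, we immediately have $|A|=|h(0)|\leq C$ and $|B|=|Dh(0)|\leq C$, giving the required control on the coefficients.

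Next, I would use a standard Taylor-type estimate: for every $x\in B_\rho$,
\[
|h(x)-L(x)|\leq \|h\|_{C^{1,\beta}(B_{1/2})}\,|x|^{1+\beta}\leq C\rho^{1+\beta}.
\]
Combining with the approximation inequality through the triangle inequality yields
\[
\sup_{B_\rho}|u(x)-L(x)|\leq \sup_{B_\rho}|u(x)-h(x)|+\sup_{B_\rho}|h(x)-L(x)|\leq \delta+C\rho^{1+\beta}.
\]

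The final step is to match the right-hand side to $\mu_1\rho$, using the specific choice of $\rho$ and $\mu_1$ from Section \ref{crafting_sequences}. In either alternative i) or ii) there, $\rho$ is selected so that $2C\rho^{\beta}=\mu_1$, hence
\[
C\rho^{1+\beta}=\tfrac{1}{2}\mu_1\rho.
\]
Therefore, choosing $\delta:=\tfrac{1}{2}\mu_1\rho$ (which, in turn, fixes $\varepsilon$ through Lemma \ref{h_function}) gives
\[
\sup_{B_\rho}|u(x)-L(x)|\leq \tfrac{1}{2}\mu_1\rho+\tfrac{1}{2}\mu_1\rho=\mu_1\rho,
\]
which is exactly the claimed bound. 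There is no serious obstacle here beyond careful bookkeeping: the real work was already done in Lemma \ref{h_function} (compactness and the limiting equations) and in the construction of $\mu_1$ and $\rho$ in Section \ref{crafting_sequences}, which was precisely engineered to absorb the $C\rho^{1+\beta}$ Taylor remainder against $\mu_1\rho$.
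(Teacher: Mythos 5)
Your proof is correct and follows essentially the same route as the paper: apply Lemma~\ref{h_function} to obtain $h$, define $L$ via the first-order Taylor polynomial of $h$ at the origin, use the triangle inequality to get $\delta + C\rho^{1+\beta}$, and then invoke the relation $2C\rho^{\beta}=\mu_1$ from Section~\ref{crafting_sequences} together with the choice $\delta=\frac{1}{2}\mu_1\rho$ to close the estimate. You even supply the explicit bound on $|A|$ and $|B|$ coming from $\|h\|_{C^{1,\beta}}\le C$, which the paper's proof leaves implicit.
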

\begin{proof}
The lemma \ref{h_function} ensures the existence of a function $h\in C^{1,\beta}(B_{1/2})$ such that
\[
\|u-h\|_{L^{\infty}(B_{1/2})}\leq \delta,
\]
where $\delta>0$ is a constant to be determined later. The regularity of $h$ implies that for any $\rho \ll 1$, there exists a universal constant $C>0$ such that
\[
\|h(x)-h(0)-Dh(0)\cdot x\|_{L^{\infty}(B_{\rho})}\leq C \rho^{1+\beta}.
\]
By defining ${L}(x)\coloneqq h(0)+Dh(0)\cdot x$, we obtain
\begin{align*}
\sup_{B_{\rho}}|u(x)-{L}(x)|
&\leq
\sup_{B_{\rho}}|u(x)-h(x)|+\sup_{B_{\rho}}|h(x)-h(0)-Dh(0)\cdot x|\\
&\leq
\delta+C\rho^{1+\beta}.
\end{align*}
Now we can choose
\[
\delta\coloneqq\frac{\mu_1}{2}\cdot \rho \quad \text{and} \quad \rho\coloneqq\left(\frac{\mu_1}{2C}\right)^{\frac{1}{\beta}},
\]
so that
\[
\sup_{B_{\rho}}|u(x)-L(x)|\leq \mu_1\cdot \rho.
\]
\end{proof}

Notice that setting the value of $\delta$ as above fixes the value of $\varepsilon$ through Lemma \ref{h_function}. Next, we iterate the previous result to find a family of affine functions that approximate $u$ on small scales. That is exactly where the auxiliary functions do not solve the same class of equations as $u$ and we ``lose" the $C^{1, \alpha}$ modulus of continuity of solutions, leading to the $C^1$-regularity.

\begin{proposition}\label{Seq_affine_functions}
Let $\sigma\in\mathcal{N}$ and $u\in C(B_1)$ be a normalized viscosity solution to \eqref{1}. Assume that $A\ref{Asigma}$-$A\ref{Af}$ are in force. If 
\[
\|f\|_{L^{\infty}(B_1)}\leq\varepsilon,
\]
then, there exists a sequence of affine functions $({L}_{n})_{n\in \mathbb{N}}$, such that
\[
\sup_{B_{\rho^{n}}}|u-L_{n}|\leq\left(\prod_{i=1}^{n}\mu_i\right)\cdot\rho^{n},
\]
where $(\mu_{n})_{n\in\mathbb{N}}$ is defined in Section \ref{crafting_sequences}. Moreover, each ${L}_{n}$ is of the form
\begin{equation}\label{ln_seq}
{L}_{n}(x)\coloneqq A_n+B_n\cdot x,
\end{equation}
and for every $n \in \N$,
\[
|A_{n+1}-A_n|\leq C \left(\prod_{i=1}^{n}\mu_i\right)\cdot\rho^{n} \quad \text{and} \quad |B_{n+1}-B_{n}|\leq C\left(\prod_{i=1}^{n}\mu_i\right).
\]
\end{proposition}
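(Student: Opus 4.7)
The plan is to argue by induction on $n \in \mathbb{N}$. For the base case $n=1$, Proposition \ref{approximation_lemma} applied directly to $u$ yields $L_1(x) = A_1 + B_1 \cdot x$ with $|A_1|, |B_1| \leq C$ and $\sup_{B_\rho}|u - L_1| \leq \mu_1 \rho$. Setting $L_0 \equiv 0$, the difference bounds hold trivially with the empty product convention.

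For the inductive step, suppose $L_1,\ldots,L_n$ have been constructed. Denote $M_n := \prod_{i=1}^n \mu_i$ and introduce the rescaled function
$$v_n(x) := \frac{u(\rho^n x) - L_n(\rho^n x)}{M_n \rho^n}.$$
The inductive hypothesis immediately yields $\|v_n\|_{L^\infty(B_1)} \leq 1$. A direct chain-rule computation gives $Du(\rho^n x) = M_n D v_n(x) + B_n$ and $D^2 u(\rho^n x) = (M_n/\rho^n)\, D^2 v_n(x)$, so $v_n$ is a viscosity solution to
$$-\sigma_n(|D v_n + \xi_n|)\left[\Delta v_n + (p-2)\left\langle D^2 v_n\, \tfrac{Dv_n + \xi_n}{|Dv_n + \xi_n|}, \tfrac{Dv_n + \xi_n}{|Dv_n + \xi_n|}\right\rangle\right] = f_n(x) \quad \text{in } B_1,$$
with $\sigma_n(t) = (M_n / \rho^n)\,\sigma(M_n t)$ precisely the element of $\mathcal{N}$ built in Section \ref{crafting_sequences}, gradient shift $\xi_n := B_n / M_n \in \R^d$, and source $f_n(x) := f(\rho^n x)$ still satisfying $\|f_n\|_{L^\infty(B_1)} \leq \varepsilon$.

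Since $\sigma_n \in \mathcal{N}$, Proposition \ref{approximation_lemma} applies to $v_n$ and delivers an affine function $\tilde{L}(x) = \tilde{A} + \tilde{B}\cdot x$ with $|\tilde{A}|, |\tilde{B}| \leq C$ and $\sup_{B_\rho} |v_n - \tilde{L}| \leq \mu_{n+1}\,\rho$, where $\mu_{n+1}$ is the factor associated to $\sigma_n$ via the non-collapsing construction. Defining
$$L_{n+1}(x) := L_n(x) + M_n \rho^n\, \tilde{L}(x/\rho^n) = \bigl(A_n + M_n \rho^n \tilde{A}\bigr) + \bigl(B_n + M_n \tilde{B}\bigr)\cdot x,$$
one reads off $A_{n+1} - A_n = M_n \rho^n \tilde{A}$ and $B_{n+1} - B_n = M_n \tilde{B}$, so the two difference bounds follow from $|\tilde{A}|, |\tilde{B}| \leq C$. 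Scaling the approximation estimate back,
$$\sup_{B_{\rho^{n+1}}} |u(x) - L_{n+1}(x)| = M_n \rho^n \sup_{B_\rho} |v_n - \tilde{L}| \leq M_n \rho^n \cdot \mu_{n+1}\, \rho = M_{n+1}\,\rho^{n+1},$$
which closes the induction.

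The main obstacle is the bookkeeping at the rescaling step: one must confirm that the modulus of continuity appearing in the rescaled equation for $v_n$ is exactly the element $\sigma_n$ built in Section \ref{crafting_sequences}, and that the approximation factor produced by Proposition \ref{approximation_lemma} at step $n$ is precisely $\mu_{n+1}$ rather than a fixed universal constant. This is where the non-collapsing property of $\mathcal{N}$ and the Dini continuity of $\sigma^{-1}$ play their essential role: they prevent the auxiliary sequence of equations from further degenerating, so that the approximation lemma transfers uniformly across scales while the product $\prod_{i=1}^n \mu_i$ controls the geometric decay of $u - L_n$ and ultimately yields a modulus of continuity for $Du$.
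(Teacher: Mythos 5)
Your induction follows the same route the paper takes, and both the rescaling of the equation and the bookkeeping for $L_{n+1}$, $A_{n+1}$, $B_{n+1}$ are correct. However, there is a genuine misstatement at the heart of the inductive step: Proposition~\ref{approximation_lemma} does \emph{not} produce a scale-adapted factor $\mu_{n+1}$. Its conclusion is always
\[
\sup_{B_\rho}\lvert v_n - \tilde{L}\rvert \leq \mu_1\,\rho,
\]
with the \emph{fixed} $\mu_1$ from Section~\ref{crafting_sequences}, regardless of which $\sigma_n\in\mathcal{N}$ appears in the rescaled equation. The statement in your final paragraph, that ``the approximation factor produced by Proposition~\ref{approximation_lemma} at step $n$ is precisely $\mu_{n+1}$,'' is therefore not what the lemma says, and there is no mechanism in the lemma by which $\mu_{n+1}$ would appear as an approximation constant.

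What actually rescues the induction is the monotonicity $r<\mu_1\leq\mu_2\leq\cdots$, built into the construction of the sequence $(\mu_n)$ in Section~\ref{crafting_sequences}. Using the correct conclusion $\sup_{B_\rho}\lvert v_n - \tilde{L}\rvert\leq\mu_1\rho$ together with $\mu_1\leq\mu_{n+1}$, one gets
\[
\sup_{B_{\rho^{n+1}}}\lvert u - L_{n+1}\rvert = M_n\rho^n\sup_{B_\rho}\lvert v_n-\tilde{L}\rvert \leq M_n\rho^n\mu_1\rho\leq M_n\mu_{n+1}\rho^{n+1}=M_{n+1}\rho^{n+1},
\]
which closes the induction. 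The role of $\mu_{n+1}$ is twofold and neither role is ``improving the approximation'': it (i) rescales $\sigma_n$ into $\sigma_{n+1}$ so that the family stays shored-up and non-collapsing, and (ii) enters the estimate only through the inequality $\mu_1\leq\mu_{n+1}$. You should replace the claim about $\mu_{n+1}$ with the $\mu_1$ conclusion and insert the monotonicity step explicitly; with that fix, your argument matches the paper's proof.
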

\begin{proof}
We resort to an induction argument. We define the auxiliary function $v_1: B_1 \to \R$,
\[
v_1(x)\coloneqq\frac{u(\rho x)-L(\rho x)}{\mu_1\rho},
\]
where $L$ comes from Proposition \ref{approximation_lemma}. Notice that $|v_1| \leq 1$ and since $u$ is a viscosity solution to \eqref{1}, we get that $v_1$ solves
{\small
\[
-\sigma_1\left(\left|Dv_1(x)+\mu_1^{-1}B\right|\right) \left[\tr(D^2v_1(x))+(p-2)\left\langle D^2v_1(x)\frac{Dv_1(x)+\mu_1^{-1}B}{|Dv_1(x)+\mu_1^{-1}B|},\frac{Dv_1(x)+\mu_1^{-1}B}{|Dv_1(x)+\mu_1^{-1}B|}\right\rangle\right]=f_1(x) \quad \text{in } \, B_1,
\]
}
where 
\[
\sigma_1(t)\coloneqq\frac{\mu_1}{\rho}\sigma(\mu_1t) \quad \text{and} \quad f_1(x)\coloneqq f(\rho x).
\]
Recall that we choose $\rho<\mu_1$ such that $\sigma_1(1)=1$, and therefore $v_1$ satisfies the hypotheses of Proposition \ref{approximation_lemma}. Then, there exists an affine function ${L}_1(x)\coloneqq A_1+B_1\cdot x$ such that 
\[
\sup_{B_\rho}|v_1-L_1| \leq \mu_1\cdot\rho.
\]
Now, let us define a second auxiliary function $v_2: B_1 \to R$, given by
\[
v_2(x)\coloneqq\frac{v_1(\rho x)-{L}_1(\rho x)}{\mu_2\rho},
\]
for $\mu_1\leq\mu_2$, as in Section \ref{crafting_sequences}. Then, $|v_2| \leq 1$ and $v_2$ is a viscosity solution to
{\small
\[
-\sigma_2\left(\left|Dv_2(x)+\mu_2^{-1}B_1\right|\right) \left[\tr(D^2v_2(x))+(p-2)\left\langle D^2v_2(x)\frac{Dv_2(x)+\mu_2^{-1}B_1}{|Dv_2(x)+\mu_2^{-1}B_1|},\frac{Dv_2(x)+\mu_2^{-1}B_1}{|Dv_2(x)+\mu_2^{-1}B_1|}\right\rangle\right]=f_2(x) \quad \text{in } \, B_1,
\]
}
where $f_2(x)\coloneqq f_1(\rho x)=f(\rho^2x)$ and
\[
\sigma_2(t)\coloneqq\frac{\mu_2}{\rho}\sigma_1(\mu_2t)=\frac{\mu_1\mu_2}{\rho^2}\sigma(\mu_1\mu_2t).
\]
The choice of $\mu_2$ ensures that $\sigma_2(1)=1$, and once more Proposition \ref{approximation_lemma} guarantees the existence of ${L}_2(x)\coloneqq A_2+B_2\cdot x$ such that
\[
\sup_{B_\rho}|v_2-{L}_2|\leq\mu_1\cdot\rho.
\]
Continuing the pattern inductively, we define
\[
v_{n+1}(x)\coloneqq\frac{v_{n}(\rho x)-{L}_{n}(\rho x)}{\mu_{n+1}\rho},
\]
Then, $|v_{n+1}| \leq 1$ and $v_{n+1}$ is a viscosity solution to a similar problem with the right-hand side given by $f_{n+1}(x)\coloneqq f_{n}(rx)$ and the degeneracy is of the form 
\[
\sigma_{n+1}(t)\coloneqq\frac{\mu_{n+1}}{r}\sigma_{n}(\mu_{n+1}t).
\]
Again, we recall that the choice of $\mu_{n+1}$ is such that $\sigma_{n+1}(1)=1$. As before, Proposition \ref{approximation_lemma} ensures the existence of an affine function $\mathrm{L}_{n+1}(x)$ that satisfy 
\[
\sup_{B_\rho}|v_{n+1}-{L}_{n+1}|_{L^{\infty}(B_{\rho})}\leq\mu_{1}\cdot\rho.
\]
Rescaling back to the original function $u$, we obtain
\[
\sup_{B_{\rho^{n+1}}}|u(x)-{L}_{n+1}(x)| \leq \left(\prod_{i=1}^{n+1}\mu_i\right)\rho^{n+1},
\]
where the sequence $({L}_n)_{n\in\mathbb{N}}$ is given by
\begin{equation}\label{function_L}
{L}_1(x)\coloneqq L(x) \quad \text{and}\quad {L}_{n+1}(x) \coloneqq {L}(x)+\sum_{i=1}^{n} \left[\left(\prod_{j=1}^{i}\mu_{j}\right)\rho^{i}{L}_{i}(\rho^{-i}x)\right].
\end{equation}
Moreover, from \eqref{function_L} we can infer
\[
A_{n+1}= A+\sum_{i=1}^{n}\left[\left(\prod_{j=1}^{i}\mu_{j}\right)\rho^{i}a_i\right] \quad \text{and} \quad B_{n+1} = B+\sum_{i=1}^{n}\left[\left(\prod_{j=1}^{i}\mu_{j}\right)b_i\right],
\]
as a consequence, we get
\[
|A_{n+1}-A_n|=\left|a_n\left(\prod_{j=1}^{n}\mu_j\right)\rho^n\right|\leq C\left(\prod_{j=1}^{n}\mu_j\right)\rho^n
\]
and
\[
|B_{n+1}-B_{n}|=\left|b_{n}\left(\prod_{j=1}^{n}\mu_j\right)\right|\leq C\left(\prod_{j=1}^{n}\mu_j\right).
\]
which concludes the proof.
\end{proof}

We are finally ready to present the proof of Theorem \ref{theo_1}.

\begin{proof}[Proof of Theorem \ref{theo_1}]
The construction of the sequence $(\mu_n)_{n\in\mathbb{N}}$, reveals two possibilities:\\ 

\noindent{\bf Case 1:} If there exists a number $N\in\mathbb{N}$ such that
\[
\mu_{N+1}=\mu_{N+2}=\cdots=\mu_{N+n}=\cdots,
\]
the family $\mathcal{N}=\{\sigma_n\}_{n\in\mathbb{N}}$ is not degenerate. Therefore, the classical analysis for a non-degenerate operator proves the existence of $0<\alpha<\beta$ for which local $C^{1,\alpha}$-regularity of the solutions are available. \\

\noindent{\bf Case 2:} If for an infinity amount of $N's$ hold $\mu_{N}<\mu_{N+1}$, then the crafting of the sequence in this scenario implies 
\[
\frac{\prod_{j=1}^{N+1}\mu_{j}}{r^{N+1}}\sigma\left(\left(\prod_{j=1}^{N+1}\mu_{j}\right)\cdot c_{N+1}\right)=1,
\]
and as a consequence, we get
\begin{align}\label{tau_seq1}
\prod_{j=1}^{N+1}\mu_{j}&=\frac{1}{c_{N+1}}\sigma^{-1}\left(\frac{r^{N+1}}{\prod_{j=1}^{N+1}\mu_{j}}\right)\\
&\leq
\frac{1}{c_{N+1}}\sigma(\theta^{N+1}),
\end{align}
where in the last inequality, we used that
\[
\theta\coloneqq\frac{r}{\mu_1}<1 \quad \text{and} \quad \left(\mu_1^{N+1}\right)^{-1}\geq \left(\prod_{j=1}^{N+1}\mu_{j}\right)^{-1}.
\]
We define the sequence $(\tau_n)_{n\in\mathbb{N}}$ given by
\[
\tau_n\coloneqq\prod_{j=1}^{n}\mu_{j},
\]
so from the information in Section \ref{crafting_sequences} and \eqref{tau_seq1}, we have that $\tau_n\in\ell^1$ and 
\[
\sum_{n=1}^{\infty}|\tau_n|\leq\sum_{n=1}^{\infty}\sigma^{-1}(\theta^n).
\]
Therefore, the sequences $(A_n)_{n\in\mathbb{N}}$ and $(B_n)_{n\in\mathbb{N}}$ from Proposition \ref{Seq_affine_functions} are Cauchy sequences, and so there exist $A_{\infty}$ and $B_{\infty}$ such that
\[
A_n\to A_{\infty} \quad \text{and} \quad B_n\to B_{\infty}.
\]
Define the affine function $L_{\infty}(x)\coloneqq A_{\infty}+B_{\infty}\cdot x$, and note that the Proposition \ref{Seq_affine_functions} give to us 
\[
|A_{\infty}-A_n|\leq C\sum_{i=n}^{\infty}\tau_ir^{n} \quad \text{and} \quad |B_{\infty}-A_{n}|\leq C\sum_{i=n}^{\infty}\tau_i. 
\]
Now, for any $0<\rho\ll 1$ sufficiently small take $n\in\mathbb{N}$ such that $r^{n+1}\leq \rho\leq r^{n}$, hence
\begin{align*}
\|u(x)-L_{\infty}\|_{L^{\infty}(B_{\rho})}&\leq \|u(x)-L_n(x)\|_{L^{\infty}(B_{r^n})}+\|L_n(x)-L_{\infty}(x)\|_{L^{\infty}(B_{r^n})}\\
&\leq
C\tau_{n}r^n+C\left(\sum_{i=n}^{\infty}\tau_i\right)r^n\\
&\leq
\frac{C}{r}\left[\tau_n+\sum_{i=n}^{\infty}\tau_i\right]\rho\\
&\leq 
C\left(\sum_{i=n}^{\infty}\tau_i\right)\rho.
\end{align*}
To finish, let us set 
\[
\gamma(t)\coloneqq C \sum_{i=n(t)}^{\infty}\tau_i,
\]
where $n(t)$ is the biggest integer number such that $n(t)\leq \ln({t^{-1}})$. Since $\tau_n\in \ell^1$, we know from the characterization present in the assumption $A\ref{Adini}$ that $\gamma(t)$ is a modulus of continuity satisfying the Dini condition, therefore
\[
\sup_{B_{\rho}}|u(x)- A_{\infty}-B_{\infty}\cdot x|\leq \gamma(\rho)\rho.
\]
We conclude the proof taking $\rho\to 0$ to ensure that $A_{\infty}=u(0)$ and $B_{\infty}=Du(0)$.
\end{proof}

\textbf{Acknowledgements.} C. Alcantara was partially supported by the Coordenação de Aperfeiçoamento de Pessoal de Nível Superior (CAPES) – Brasil and by Stone Instituição de Pagamento S.A., under the Project StoneLab. This study was partly financed by the CAPES—Brazil - Finance Code 001. M. Santos is partially supported by the Portuguese government through FCT-Funda\c c\~ao para a Ci\^encia e a Tecnologia, I.P., under the projects UID/04561/2025 and the Scientific Employment Stimulus - Individual Call (CEEC Individual), DOI identifier https://doi.org/10.54499/2023.08772.CEECIND/CP2831/CT0003. 

\bibliographystyle{amsplain}
\bibliography{Alcantara_Santos}

@preamble{"\def\polhk#1{\setbox0=\hbox{#1}{\ooalign{\hidewidth\lower1.5ex\hbox{`}\hidewidth\crcr\unhbox0}}} "}

@article {Attouchi-Ruosteenoja2020,
    AUTHOR = {Attouchi, Amal and Ruosteenoja, Eero},
     TITLE = {Gradient regularity for a singular parabolic equation in non-divergence form},
   JOURNAL = {Discrete Contin. Dyn. Syst.},
  FJOURNAL = {Discrete and Continuous Dynamical Systems. Series A},
    VOLUME = {40},
      YEAR = {2020},
    NUMBER = {10},
     PAGES = {5955--5972},
      ISSN = {1078-0947},
   MRCLASS = {35K55 (35B65 35D40 35K67 35K92)},
  MRNUMBER = {4128334},
MRREVIEWER = {Ahmed Youssfi},
       DOI = {10.3934/dcds.2020254},
       URL = {https://doi.org/10.3934/dcds.2020254},
}

@article {Attouchi-Ruosteenoja2018,
    AUTHOR = {Attouchi, Amal and Ruosteenoja, Eero},
     TITLE = {Remarks on regularity for {$p$}-{L}aplacian type equations in non-divergence form},
   JOURNAL = {J. Differential Equations},
  FJOURNAL = {Journal of Differential Equations},
    VOLUME = {265},
      YEAR = {2018},
    NUMBER = {5},
     PAGES = {1922--1961},
      ISSN = {0022-0396},
   MRCLASS = {35J60 (35B65 35J70 35J75)},
  MRNUMBER = {3800106},
MRREVIEWER = {Mihai Mih\u{a}ilescu},
       DOI = {10.1016/j.jde.2018.04.017},
       URL = {https://doi.org/10.1016/j.jde.2018.04.017},
}

@article {Jin-Silvestre2017,
    AUTHOR = {Jin, Tianling and Silvestre, Luis},
     TITLE = {H\"{o}lder gradient estimates for parabolic homogeneous
              {$p$}-{L}aplacian equations},
   JOURNAL = {J. Math. Pures Appl. (9)},
  FJOURNAL = {Journal de Math\'{e}matiques Pures et Appliqu\'{e}es. Neuvi\`eme S\'{e}rie},
    VOLUME = {108},
      YEAR = {2017},
    NUMBER = {1},
     PAGES = {63--87},
      ISSN = {0021-7824},
   MRCLASS = {35K55 (35B65 35D40 35K92)},
  MRNUMBER = {3660769},
MRREVIEWER = {Shulin Zhou},
       DOI = {10.1016/j.matpur.2016.10.010},
       URL = {https://doi.org/10.1016/j.matpur.2016.10.010},
}

@article {Does2011,
    AUTHOR = {Does, Kerstin},
     TITLE = {An evolution equation involving the normalized {$p$}-{L}aplacian},
   JOURNAL = {Commun. Pure Appl. Anal.},
  FJOURNAL = {Communications on Pure and Applied Analysis},
    VOLUME = {10},
      YEAR = {2011},
    NUMBER = {1},
     PAGES = {361--396},
      ISSN = {1534-0392},
   MRCLASS = {35K59 (35B40 35D40 35K20 65M06)},
  MRNUMBER = {2746543},
MRREVIEWER = {Haifeng Shang},
       DOI = {10.3934/cpaa.2011.10.361},
       URL = {https://doi.org/10.3934/cpaa.2011.10.361},
}

@article {Manfredi-Parviainen-Rossi,
    AUTHOR = {Manfredi, Juan J. and Parviainen, Mikko and Rossi, Julio D.},
     TITLE = {An asymptotic mean value characterization for a class of
              nonlinear parabolic equations related to tug-of-war games},
   JOURNAL = {SIAM J. Math. Anal.},
  FJOURNAL = {SIAM Journal on Mathematical Analysis},
    VOLUME = {42},
      YEAR = {2010},
    NUMBER = {5},
     PAGES = {2058--2081},
      ISSN = {0036-1410},
   MRCLASS = {35K92 (35D40 35Q91 91A05 91A80)},
  MRNUMBER = {2684311},
MRREVIEWER = {Gayrat Urazboev},
       DOI = {10.1137/100782073},
       URL = {https://doi.org/10.1137/100782073},
}

@incollection {Evans-Spruck,
    AUTHOR = {Evans, Lawrence C. and Spruck, Joel},
     TITLE = {Motion of level sets by mean curvature. {I} [ {MR}1100206 (92h:35097)]},
 BOOKTITLE = {Fundamental contributions to the continuum theory of evolving phase interfaces in solids},
     PAGES = {328--374},
 PUBLISHER = {Springer, Berlin},
      YEAR = {1999},
   MRCLASS = {35K55 (35B05 35D05 53A10 74A50 74N99)},
  MRNUMBER = {1770903},
}

@article {Peres-Sheffield,
    AUTHOR = {Peres, Yuval and Sheffield, Scott},
     TITLE = {Tug-of-war with noise: a game-theoretic view of the {$p$}-{L}aplacian},
   JOURNAL = {Duke Math. J.},
  FJOURNAL = {Duke Mathematical Journal},
    VOLUME = {145},
      YEAR = {2008},
    NUMBER = {1},
     PAGES = {91--120},
      ISSN = {0012-7094},
   MRCLASS = {35J60 (05C50 60H15 91A05 91A46)},
  MRNUMBER = {2451291},
       DOI = {10.1215/00127094-2008-048},
       URL = {https://doi.org/10.1215/00127094-2008-048},
}

@article{crandall1992user,
	title={User's guide to viscosity solutions of second order partial differential equations},
	author={Crandall, Michael and Ishii, Hitoshi and Lions, Pierre-Louis},
	journal={Bulletin of the American Mathematical Society},
	volume={27},
	number={1},
	pages={1--67},
	year={1992}
}

@article {caffarelli89,
    AUTHOR = {Caffarelli, Luis},
     TITLE = {Interior a priori estimates for solutions of fully nonlinear
              equations},
   JOURNAL = {Ann. of Math. (2)},
  FJOURNAL = {Annals of Mathematics. Second Series},
    VOLUME = {130},
      YEAR = {1989},
    NUMBER = {1},
     PAGES = {189--213},
      ISSN = {0003-486X},
     CODEN = {ANMAAH},
   MRCLASS = {35B45 (35J60)},
  MRNUMBER = {1005611},
MRREVIEWER = {Nikola{\u\i} Kutev},
       DOI = {10.2307/1971480},
       URL = {http://dx.doi.org/10.2307/1971480},
}

@article {imb_silv_jems,
    AUTHOR = {Imbert, Cyril and Silvestre, Luis},
     TITLE = {Estimates on elliptic equations that hold only where the
              gradient is large},
   JOURNAL = {J. Eur. Math. Soc. (JEMS)},
  FJOURNAL = {Journal of the European Mathematical Society (JEMS)},
    VOLUME = {18},
      YEAR = {2016},
    NUMBER = {6},
     PAGES = {1321--1338},
      ISSN = {1435-9855},
   MRCLASS = {35J60 (35B45 35B65 35D40 35J70)},
  MRNUMBER = {3500837},
       DOI = {10.4171/JEMS/614},
       URL = {http://dx.doi.org/10.4171/JEMS/614},
}

@book {CC95,
    AUTHOR = {Caffarelli, Luis and Cabr{\'e}, Xavier},
     TITLE = {Fully nonlinear elliptic equations},
    SERIES = {American Mathematical Society Colloquium Publications},
    VOLUME = {43},
 PUBLISHER = {American Mathematical Society, Providence, RI},
      YEAR = {1995},
     PAGES = {vi+104},
      ISBN = {0-8218-0437-5},
   MRCLASS = {35J60 (35-01 35B45 35B65 35Dxx)},
  MRNUMBER = {1351007},
MRREVIEWER = {P. Lindqvist},
       DOI = {10.1090/coll/043},
       URL = {http://dx.doi.org/10.1090/coll/043},
}

@article {birregrad,
    AUTHOR = {Birindelli, Isabeau and Demengel, Fran\c{c}oise},
     TITLE = {Regularity for radial solutions of degenerate fully nonlinear
              equations},
   JOURNAL = {Nonlinear Anal.},
  FJOURNAL = {Nonlinear Analysis. Theory, Methods \& Applications. An
              International Multidisciplinary Journal. Series A: Theory and
              Methods},
    VOLUME = {75},
      YEAR = {2012},
    NUMBER = {17},
     PAGES = {6237--6249},
      ISSN = {0362-546X},
     CODEN = {NOANDD},
   MRCLASS = {35J60 (35B07 35B65)},
  MRNUMBER = {2959802},
       DOI = {10.1016/j.na.2012.06.028},
       URL = {http://dx.doi.org/10.1016/j.na.2012.06.028},
}

@article {jlm,
    AUTHOR = {Juutinen, Petri and Lindqvist, Peter and Manfredi, Juan},
     TITLE = {On the equivalence of viscosity solutions and weak solutions
              for a quasi-linear equation},
   JOURNAL = {SIAM J. Math. Anal.},
  FJOURNAL = {SIAM Journal on Mathematical Analysis},
    VOLUME = {33},
      YEAR = {2001},
    NUMBER = {3},
     PAGES = {699--717 (electronic)},
      ISSN = {0036-1410},
   MRCLASS = {35J60 (35D05 35J70 35K55)},
  MRNUMBER = {1871417},
MRREVIEWER = {Andrzej Swiech},
       DOI = {10.1137/S0036141000372179},
       URL = {http://dx.doi.org/10.1137/S0036141000372179},
}

@preamble{
	"\def\polhk#1{\setbox0=\hbox{#1}{\ooalign{\hidewidth
	\lower1.5ex\hbox{`}\hidewidth\crcr\unhbox0}}} "
}

@preamble{
	"\def\cprime{$'$} "
}

@book {Caffarelli-Cabre-1995,
	AUTHOR = {Cabr\'e, Xavier and Caffarelli, Luis},
	TITLE = {Fully nonlinear elliptic equations},
	SERIES = {American Mathematical Society Colloquium Publications},
	VOLUME = {43},
	PUBLISHER = {American Mathematical Society, Providence, RI},
	YEAR = {1995},
	PAGES = {vi+104},
	ISBN = {0-8218-0437-5},
	MRCLASS = {35J60 (35-01 35B45 35B65 35Dxx)},
	MRNUMBER = {1351007},
	MRREVIEWER = {P. Lindqvist},
	URL = {https://doi.org/10.1090/coll/043},
}

@article {APPT2022,
    AUTHOR = {Andrade, P\^edra. and Pellegrino, Daniel and Pimentel, Edgard and Teixeira,
              Eduardo},
     TITLE = {{$C^1$}-regularity for degenerate diffusion equations},
   JOURNAL = {Adv. Math.},
  FJOURNAL = {Advances in Mathematics},
    VOLUME = {409},
      YEAR = {2022},
     PAGES = {Paper No. 108667, 34},
      ISSN = {0001-8708,1090-2082},
   MRCLASS = {35B65 (35D40 35J60 35J70)},
  MRNUMBER = {4477428},
MRREVIEWER = {Nikos\ Yannakakis},
       DOI = {10.1016/j.aim.2022.108667},
       URL = {https://doi.org/10.1016/j.aim.2022.108667},
}

@article {LPS13,
    AUTHOR = {Luiro, Hannes and Parviainen, Mikko and Saksman, Eero},
     TITLE = {Harnack's inequality for {$p$}-harmonic functions via
              stochastic games},
   JOURNAL = {Comm. Partial Differential Equations},
  FJOURNAL = {Communications in Partial Differential Equations},
    VOLUME = {38},
      YEAR = {2013},
    NUMBER = {11},
     PAGES = {1985--2003},
      ISSN = {0360-5302,1532-4133},
   MRCLASS = {35J92 (35B65 35J60 49N70 91A15)},
  MRNUMBER = {3169768},
MRREVIEWER = {Gabriella\ Zecca},
       DOI = {10.1080/03605302.2013.814068},
       URL = {https://doi.org/10.1080/03605302.2013.814068},
}

@article {Ruo16,
    AUTHOR = {Ruosteenoja, Eero},
     TITLE = {Local regularity results for value functions of tug-of-war
              with noise and running payoff},
   JOURNAL = {Adv. Calc. Var.},
  FJOURNAL = {Advances in Calculus of Variations},
    VOLUME = {9},
      YEAR = {2016},
    NUMBER = {1},
     PAGES = {1--17},
      ISSN = {1864-8258,1864-8266},
   MRCLASS = {35Q91 (35B65 49L25 91A15)},
  MRNUMBER = {3441079},
       DOI = {10.1515/acv-2014-0021},
       URL = {https://doi.org/10.1515/acv-2014-0021},
}

@article {CPCM13,
    AUTHOR = {Charro, Fernando and De Philippis, Guido and Di Castro, Agnese
              and M\'aximo, Davi},
     TITLE = {On the {A}leksandrov-{B}akelman-{P}ucci estimate for the
              infinity {L}aplacian},
   JOURNAL = {Calc. Var. Partial Differential Equations},
  FJOURNAL = {Calculus of Variations and Partial Differential Equations},
    VOLUME = {48},
      YEAR = {2013},
    NUMBER = {3-4},
     PAGES = {667--693},
      ISSN = {0944-2669,1432-0835},
   MRCLASS = {35J60 (35B45)},
  MRNUMBER = {3116027},
       DOI = {10.1007/s00526-012-0567-3},
       URL = {https://doi.org/10.1007/s00526-012-0567-3},
}

@article {BG15,
    AUTHOR = {Banerjee, Agnid and Garofalo, Nicola},
     TITLE = {On the {D}irichlet boundary value problem for the normalized
              {$p$}-{L}aplacian evolution},
   JOURNAL = {Commun. Pure Appl. Anal.},
  FJOURNAL = {Communications on Pure and Applied Analysis},
    VOLUME = {14},
      YEAR = {2015},
    NUMBER = {1},
     PAGES = {1--21},
      ISSN = {1534-0392,1553-5258},
   MRCLASS = {35K55 (35B40 35B65 35K20)},
  MRNUMBER = {3299022},
MRREVIEWER = {Daria\ E.\ Apushkinskaya},
       DOI = {10.3934/cpaa.2015.14.1},
       URL = {https://doi.org/10.3934/cpaa.2015.14.1},
}

@article {APR17,
    AUTHOR = {Attouchi, Amal and Parviainen, Mikko and Ruosteenoja, Eero},
     TITLE = {{$C^{1,\alpha}$} regularity for the normalized {$p$}-{P}oisson
              problem},
   JOURNAL = {J. Math. Pures Appl. (9)},
  FJOURNAL = {Journal de Math\'ematiques Pures et Appliqu\'ees. Neuvi\`eme
              S\'erie},
    VOLUME = {108},
      YEAR = {2017},
    NUMBER = {4},
     PAGES = {553--591},
      ISSN = {0021-7824,1776-3371},
   MRCLASS = {35J92 (35B65 35D40 35J60)},
  MRNUMBER = {3698169},
MRREVIEWER = {Maria\ Emilia\ Amendola},
       DOI = {10.1016/j.matpur.2017.05.003},
       URL = {https://doi.org/10.1016/j.matpur.2017.05.003},
}

@article {BM20,
    AUTHOR = {Banerjee, Agnid and Munive, Isidro H.},
     TITLE = {Gradient continuity estimates for the normalized
              {$p$}-{P}oisson equation},
   JOURNAL = {Commun. Contemp. Math.},
  FJOURNAL = {Communications in Contemporary Mathematics},
    VOLUME = {22},
      YEAR = {2020},
    NUMBER = {8},
     PAGES = {1950069, 24},
      ISSN = {0219-1997,1793-6683},
   MRCLASS = {35J60 (35B65 35D40)},
  MRNUMBER = {4142330},
       DOI = {10.1142/S021919971950069X},
       URL = {https://doi.org/10.1142/S021919971950069X},
}

@article{WYG24,
  title={Regularity of solutions to degenerate normalized $p$-Laplacian equation with general variable exponents},
  author={Wang, Jiangwen and Yin, Yunwen and Jiang, Feida},
  journal={arXiv preprint arXiv.2303.01190},
  year={2024}
}

@article{AN25,
  title={Interior regularity estimates for fully nonlinear equations with arbitrary nonhomogeneous degeneracy laws},
  author={Andrade, P\^{e}dra and Nascimento, Thialita},
  journal={arXiv preprint arXiv:2501.03357},
  year={2025}
}

@article{WF24,
  title={Regularity of solutions for degenerate/singular fully nonlinear integro-differential equation},
  author={Wang, Jiangwen and Jiang, Feida},
  journal={arXiv preprint arXiv:2408.14779},
  year={2024}
}

@article{PS24,
  title={Differentiability of solutions for a degenerate fully nonlinear free transmission problem},
  author={Pimentel, Edgard and Stolnicki, David},
  journal={arXiv preprint arXiv:2405.05406},
  year={2024}
}

@article {KMP12,
    AUTHOR = {Kawohl, Bernd and Manfredi, Juan and Parviainen, Mikko},
     TITLE = {Solutions of nonlinear {PDE}s in the sense of averages},
   JOURNAL = {J. Math. Pures Appl. (9)},
  FJOURNAL = {Journal de Math\'ematiques Pures et Appliqu\'ees. Neuvi\`eme
              S\'erie},
    VOLUME = {97},
      YEAR = {2012},
    NUMBER = {2},
     PAGES = {173--188},
      ISSN = {0021-7824,1776-3371},
   MRCLASS = {35J92 (31B35 35C15 35C20 91A05 91A44)},
  MRNUMBER = {2875296},
MRREVIEWER = {Alan\ V.\ Lair},
       DOI = {10.1016/j.matpur.2011.07.001},
       URL = {https://doi.org/10.1016/j.matpur.2011.07.001},
}

@article{BessadaSilva2025,
  title={Regularity estimates for quasilinear elliptic PDEs in non-divergence form with Hamiltonian terms and applications},
  author={Bessa, Junior and da Silva, Jo\~{a}o Vitor},
  journal={arXiv preprint arXiv:2504.10723},
  year={2025}
}

@article{AlcdaSilSant2025,
  title={Geometric regularity estimates for quasi-linear elliptic models in non-divergence form with strong absorption },
  author={Alcantara, Claudemir and da Silva, Jo\~{a}o Vitor and de Santana S\'{a}, Ginaldo},
  journal={arXiv preprint arXiv:2504.10723},
  year={2025}
}

\bigskip

\noindent\textsc{Claudemir Alcantara}\\
Department of Mathematics\\
Pontifical Catholic University of Rio de Janeiro (PUC-Rio), \\
22451-900, Rio de Janeiro, Brazil\\
\noindent\texttt{alcantara@mat.puc-rio.br}

\vspace{.15in}

\noindent\textsc{Makson S. Santos}\\
Center for Mathematical Studies (CEMS.UL)\\
University of Lisbon\\
1749-016 Lisboa, Portugal\\
\noindent\texttt{msasantos@ciencias.ulisboa.pt}

\end{document}